\theoremstyle{plain}
\newtheorem{thm}{Thm}[section]
\newtheorem{Question}{Question}
\newtheorem{theorem}[thm]{Theorem}
\newtheorem{lemma}[thm]{Lemma}
\newtheorem{proposition}[thm]{Proposition}
\newtheorem{problem}[thm]{Problem}
\newtheorem{definition}[thm]{Definition}
\newenvironment{proof*}{\noindent\emph{Proof of the claim:}}{\hfill$\Diamond$}
\renewcommand{\pod}[1]{\allowbreak\mathchoice
	{\if@display \mkern 0mu\else \mkern 0mu\fi (#1)}
	{\if@display \mkern 0mu\else \mkern 0mu\fi (#1)}
	{\mkern 1mu(\mathrm{mod}\mkern 4mu #1)}
	{\mkern 0mu(#1)}
}
\tikzstyle{vertex}=[circle, draw, fill=black!50,
\tikzset{->-/.style={decoration={
			markings,
			mark=at position .5 with {\arrow{>}}},postaction={decorate}}}
\tikzstyle{bigblue}=[color=blue, very thick, >=stealth]
\tikzstyle{lightblue}=[color=blue, thin, >=stealth]
\tikzstyle{bigred}=[color=red, very thick, >=stealth]
\tikzstyle{lightred}=[color=red, thin, >=stealth]
\tikzstyle{biggreen}=[color=black!30!green, very thick, >=stealth]
\tikzstyle{lightgreen}=[color=black!30!green,  thin, >=stealth]
\title{Circular $(4-\epsilon)$-coloring of some classes of signed graphs}
\date{\today}
\author[1, 2]{Franti\v{s}ek Kardo\v{s}}
\author[1]{Jonathan Narboni}
\author[3]{\\ Reza Naserasr}
\author[3]{Zhouningxin Wang}
\affil[1]{\small Université de Bordeaux, CNRS,  LaBRI,  F-33400 Talence, France}
\affil[2]{\small Comenius University, Bratislava, Slovakia}
\affil[3]{\small Université de Paris, CNRS, IRIF, F-75006, Paris, France, France\linebreak\linebreak E-mails: \{frantisek.kardos, jonathan.narboni\}@u-bordeaux.fr, \linebreak \{reza, wangzhou4\}@irif.fr}
\date{}
\begin{document}
\baselineskip 0.65cm

\maketitle
	
\abstract{A circular $r$-coloring of a signed graph $(G, \sigma)$ is an assignment $\phi$ of points of a circle $C_r$ of circumference $r$ to the vertices of $(G, \sigma)$ such that for each positive edge $uv$ of $(G, \sigma)$ the distance of $\phi(v)$ and $\phi(v)$ is at least 1 and for each negative edge $uv$ the distance of $\phi(u)$ from the antipodal of $\phi(v)$ is at least 1. The circular chromatic number of $(G, \sigma)$, denoted $\chi_c(G, \sigma)$, is the infimum of $r$ such that $(G, \sigma)$ admits a circular $r$-coloring. 

This notion is recently defined by Naserasr, Wang, and Zhu who, among other results, proved that for any signed $d$-degenerate simple graph $\hat{G}$ we have $\chi_c(\hat{G})\leq 2d$. For $d\geq 3$, examples of signed $d$-degenerate simple graphs of circular chromatic number $2d$ are provided. But for $d=2$ only examples of signed 2-degenerate simple graphs of circular chromatic number close enough to $4$ are given, noting that these examples are also signed bipartite planar graphs.

In this work we first observe the following restatement of the 4-color theorem: If $(G,\sigma)$ is a signed bipartite planar simple graph where vertices of one part are all of degree 2, then $\chi_c(G,\sigma)\leq \frac{16}{5}$. 
Motivated by this observation, we provide an improved upper bound of $ 4-\dfrac{2}{\lfloor \frac{n+1}{2} \rfloor}$ for the circular chromatic number of a signed 2-degenerate simple graph on $n$ vertices and an improved upper bound of $ 4-\dfrac{4}{\lfloor \frac{n+2}{2} \rfloor}$ for the circular chromatic number of a signed bipartite planar simple graph on $n$ vertices. We then show that each of the bounds is tight for any value of $n\geq 4$.}

\section{Introduction}

A \emph{signed graph} $(G, \sigma)$ is a graph $G$ together with a signature $\sigma$ which assigns to each edge of $G$ one of the two signs, either positive or negative. For simplicity we may use $\hat{G}$ to denote a signed graph based on a graph $G$. A key notion in the study of signed graphs is the notion of \emph{switching}, which is to multiply the signs of all the edges of an edge-cut by a $-$. Two signatures on a same graph are said to be \emph{equivalent} if one is a switching of the other. The \emph{sign of a closed walk} $W$ of $(G, \sigma)$ is the product of the signs of all edges of $W$, counting multiplicity, noting that it is invariant under switching. One of the earliest theorems on signed graph is the following. 

\begin{theorem}{\em \cite{Z82a}}\label{Thm:Zaslavski}
Two signatures $\sigma_1$ and $\sigma_2$ on $G$ are equivalent if and only if each cycle $C$ of $G$ has a same sign in $(G, \sigma_1)$ and $(G, \sigma_2)$.
\end{theorem}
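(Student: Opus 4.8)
The plan is to prove the two implications separately; the forward one is routine, and the reverse one rests on a spanning-tree argument.

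\emph{($\Rightarrow$).} Suppose $\sigma_2$ is obtained from $\sigma_1$ by a sequence of switchings. It is enough to check that a single switching, say on an edge-cut $[X,V(G)\setminus X]$, preserves the sign of every cycle $C$. This is immediate: $C$ crosses the edge-cut an even number of times, so an even number of edges of $C$ have their sign multiplied by $-1$, and the product $\sigma(C)$ is unchanged. Iterating gives $\sigma_1(C)=\sigma_2(C)$ for every cycle $C$.

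\emph{($\Leftarrow$).} We may assume $G$ is connected (otherwise argue in each component). Introduce the product signature $\sigma$ with $\sigma(e)=\sigma_1(e)\sigma_2(e)$, so that $\sigma(e)=-1$ exactly on the edges where $\sigma_1$ and $\sigma_2$ differ. By hypothesis $\sigma_1(C)=\sigma_2(C)$ for every cycle $C$, hence $\sigma(C)=+1$ for every cycle; that is, $(G,\sigma)$ is balanced. Note that a vertex set $X$ for which switching turns $\sigma$ into the all-positive signature is precisely a vertex set for which switching turns $\sigma_1$ into $\sigma_2$: switching on $X$ flips $\sigma(e)$ iff $e$ has exactly one end in $X$, and the identity $\sigma_1(e)\sigma_2(e)\cdot(-1)=+1$ rewrites as $\sigma_1(e)\cdot(-1)=\sigma_2(e)$. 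So it suffices to show that every balanced signed graph is switching-equivalent to the all-positive one.

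To do this, fix a spanning tree $T$ of $G$ rooted at a vertex $r$, and for each vertex $v$ let $f(v)$ be the product of the $\sigma$-signs along the path in $T$ from $r$ to $v$. Switch on $X=\{\,v : f(v)=-1\,\}$. For a tree edge $uv$ with $u$ the parent of $v$ one has $f(v)=f(u)\sigma(uv)$; a two-line case analysis on whether $\sigma(uv)$ is $+1$ or $-1$ shows that after switching the edge $uv$ becomes positive, so all tree edges are now positive. Finally, for any non-tree edge $e=xy$, its fundamental cycle $C_e$ is $e$ together with the (now all-positive) tree path between $x$ and $y$, so the current sign of $e$ equals the sign of $C_e$; since cycle signs are switching-invariant (by the forward direction) and $(G,\sigma)$ is balanced, this sign is $+1$. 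Hence every edge is positive, as required.

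The only step needing any care is the bookkeeping that converts ``$\sigma_1$ and $\sigma_2$ are equivalent'' into ``$\sigma_1\sigma_2$ is balanced'' and back, together with the check that switching on $X$ neutralizes exactly the tree edges; the rest is standard.
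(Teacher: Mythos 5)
Your proof is correct and complete. Note that the paper does not prove this statement at all --- it is quoted from Zaslavsky's 1982 article as background --- so there is no in-paper argument to compare against. What you give is the standard proof: the forward direction via the fact that a cycle meets any edge-cut in an even number of edges, and the converse by reducing to the claim that a balanced signed graph switches to the all-positive one, established by switching on the set $X=\{v : f(v)=-1\}$ defined from a rooted spanning tree and then reading off the signs of the non-tree edges from their fundamental cycles. The reduction step (that switching $\sigma_1$ into $\sigma_2$ on $X$ is the same as switching the product signature $\sigma_1\sigma_2$ into the all-positive signature on $X$) is stated and justified correctly, and the two-line verification that all tree edges become positive after the switch is sound.
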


The study of coloring and homomorphisms of signed graphs has gained recent attention for various reasons, in particular because it provides a frame for a better connection between minor theory and graph coloring. A \emph{homomorphism} of a signed graph $(G,\sigma)$ to a signed graph $(H,\sigma)$ is a mapping of vertices and edges of $G$ (respectively) to the vertices and edges of $H$ which preserves  adjacencies, incidences, and signs of closed walks. As an application of Theorem~\ref{Thm:Zaslavski}, one can show that this definition is equivalent to finding an equivalent signature $\sigma'$ of $\sigma$ and a mapping of vertices and edges of $G$ to the vertices and edges of $H$ (respectively) which preserves adjacencies, incidences and signs of edges with respect to $\sigma'$ and $\pi$. 

When $G$ and $H$ are simple graphs, then the edge mapping would be implied from the vertex mapping. This would be the case in most of this work. The \emph{core} of a signed graph $(G, \sigma)$ is the smallest subgraph of it to which $(G, \sigma)$ admits a homomorphism. For a proof that core is well defined and unique up to a switch-isomorphism we refer to \cite{NSZ21}.

In the study of homomorphisms of signed graphs, two restrictions stand out: I. restriction to signed graphs $(G, -)$ where all edges are negative, II. restriction to signed bipartite graphs. In the former class of signed graphs, the existence of a homomorphism between two members is solely based on a homomorphism of the graphs they are based on. Thus the homomorphism study in this subclass is the same as the homomorphism study of graphs while a better connection to minor theory can be provided. The restriction on the class of signed bipartite graphs is a focus of this work. It is shown in \cite{NRS15, NSZ21} that the restriction of homomorphism study on this subclass is already as rich as the study of graph homomorphism by the following construction. 

\begin{definition}
Given a graph $G$, the signed (bipartite) graph $S(G)$ is the signed graph obtained from $G$ by replacing each edge $uv$ of $G$ with a negative 4-cycle where $u$ and $v$ are two non-adjacent vertices of it and the other two vertices are new.
\end{definition}

It is then shown in \cite{NRS15} that:

\begin{theorem}
Given graphs $G$ and $H$ there exists a homomorphism of $G$ to $H$ if and only if $S(G)$ maps to $S(H)$.
\end{theorem}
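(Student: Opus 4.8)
The plan is to prove the two implications separately. Throughout, for an edge $e=uv$ of a graph, write $C_e$ for the negative $4$-cycle that replaces $e$ in the $S(\cdot)$ construction, and call its two new (degree-$2$) vertices the \emph{subdivision vertices} of $e$; thus $V(S(G))$ is partitioned into $V(G)$ and the set of subdivision vertices, and $S(G)$ is the union of the $C_e$'s glued along $V(G)$.

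\medskip
\noindent\emph{From $G\to H$ to $S(G)\to S(H)$.}
Given a homomorphism $f\colon G\to H$, I would define $g\colon V(S(G))\to V(S(H))$ by $g|_{V(G)}=f$ and, for each edge $e=uv$ of $G$, by mapping the two subdivision vertices of $e$ bijectively to the two subdivision vertices of the edge $f(u)f(v)$ of $H$. Since $G,H$ are simple and $f$ is a homomorphism, $f(u)\neq f(v)$, so $f(u)f(v)\in E(H)$ and $C_{f(u)f(v)}$ is well defined, and $g$ restricts to a graph isomorphism $C_e\to C_{f(u)f(v)}$. Each of these $4$-cycles has a unique cycle, namely itself, and it is negative in both; hence $g|_{C_e}$ is a homomorphism of signed graphs (it preserves the sign of the only cycle), in particular it preserves signs of all closed walks inside $C_e$. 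Finally, any closed walk of $S(G)$, after being rotated to start at a vertex of $V(G)$, decomposes into consecutive length-$2$ pieces each lying inside some single $C_e$; since $g$ preserves the sign of each piece, it preserves the sign of the whole walk. Thus $g\colon S(G)\to S(H)$.

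\medskip
\noindent\emph{From $S(G)\to S(H)$ to $G\to H$.}
The core of the argument is the structural claim that \emph{every negative closed walk of length $4$ in $S(H)$ is a traversal of one of the defining negative $4$-cycles $C_{e'}$, $e'\in E(H)$}. To see this, note $S(H)$ is bipartite with parts $V(H)$ and the set of subdivision vertices, and each subdivision vertex has degree $2$, its two neighbours being the endpoints of its edge. A closed walk $w_0w_1w_2w_3w_0$ alternates between the two parts; a short case analysis on whether $w_0,w_2$ lie in $V(H)$ or not, and on whether $w_0=w_2$ or $w_1=w_3$, shows that in every case the walk has sign $+$ unless $w_0,w_1,w_2,w_3$ are exactly the four vertices, traversed cyclically, of some $C_{e'}$.

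Now let $g\colon S(G)\to S(H)$; we may assume $G$ has an edge (otherwise $S(G)=G$ is edgeless and one only checks $V(H)\neq\emptyset$, which holds whenever $V(G)\neq\emptyset$). For each edge $e=uv$ of $G$, $g$ sends the negative $4$-cycle $C_e$ to a negative closed $4$-walk, hence by the claim to a traversal of some $C_{e'}$ with $e'\in E(H)$; in particular the image visits four distinct vertices, so $g|_{C_e}$ is a graph isomorphism $C_e\to C_{e'}$. Such an isomorphism either sends the pair $\{u,v\}$ onto the two endpoints of $e'$ in $V(H)$, or onto the two subdivision vertices of $C_{e'}$. Fix a connected component of $G$ containing an edge. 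If the second (``flipped'') alternative occurs at some vertex, then, since subdivision vertices of $S(H)$ are only adjacent to vertices of $V(H)$, the flip is forced along every incident edge and propagates through the whole component, yielding a proper $2$-colouring of it; so this component is bipartite, hence maps to $K_2$, and since $S(H)$ (hence $H$) has an edge, it maps to $H$. Otherwise $g$ maps all vertices of the component into $V(H)$, and then for each edge $e=uv$ of the component $g(u)g(v)=e'\in E(H)$, so $g$ restricted to the component is a graph homomorphism into $H$. Doing this for every component, and mapping isolated vertices of $G$ to an arbitrary vertex of $H$, assembles the desired homomorphism $f\colon G\to H$.

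\medskip
\noindent\emph{Main obstacle.}
The delicate point is the ``flipped'' alternative in the second implication: a homomorphism $S(G)\to S(H)$ need \emph{not} respect the natural bipartitions, and the flip genuinely occurs (already for $G=H=K_2$). The resolution—and the reason the clean statement still holds—is that whenever a flip occurs, the relevant part of $G$ is forced to be bipartite, so the conclusion follows for a soft reason; one then just has to be careful that the remaining (non-flipped) components and the degenerate cases (isolated vertices, edgeless $G$ or $H$) are handled uniformly.
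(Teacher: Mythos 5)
The paper does not actually prove this statement: it is imported verbatim from \cite{NRS15} and used as a black box, so there is no in-paper argument to compare yours against. Judged on its own, your proof is essentially correct and self-contained, and its skeleton is the natural one: the forward direction by mapping each $C_e$ isomorphically onto $C_{f(u)f(v)}$, and the backward direction via the structural fact that every \emph{negative} closed $4$-walk of $S(H)$ is a traversal of one of the defining cycles $C_{e'}$ (degenerate $4$-walks repeat each edge an even number of times and are positive, and simplicity of $H$ plus the independence of $V(H)$ and of the subdivision vertices forces every genuine $4$-cycle to be some $C_{e'}$). The dichotomy ``endpoints versus subdivision vertices'' and the observation that a flip propagates are exactly the right points to isolate.

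Two steps are stated more briskly than they deserve. First, in the forward direction, ``$g$ preserves the sign of each length-$2$ piece'' is not meaningful until signatures are fixed, since the sign of an open walk is not switching-invariant; the clean fix is to switch at subdivision vertices (which is free, as they have degree $2$) so that each $C_{e'}$ of $S(H)$ carries exactly one negative edge, choose the bijection on the two subdivision vertices of $e$ so that $g$ becomes sign-preserving edge by edge, and then conclude that all closed-walk signs are preserved. Second, in the flipped case you assert that the component acquires a proper $2$-colouring without exhibiting it. In fact more is true: each subdivision vertex of $S(H)$ lies in exactly one $C_{e'}$, so once some $g(u)$ is a subdivision vertex of $C_{e'}$, \emph{every} vertex of that component of $G$ is forced into the two-element set consisting of the two subdivision vertices of that single $C_{e'}$, with adjacent vertices of $G$ receiving distinct ones; this is the $2$-colouring, and composing with the map sending these two vertices to the two endpoints of $e'$ already lands the component in $H$ without a detour through $K_2$. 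Neither point affects the validity of the argument; both are one-line repairs.
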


In this work we consider the recent definition of the circular chromatic number of signed graph as defined in \cite{NWZ21}, noting that a similar definition was given earlier in \cite{KS18} but that the two parameters behave differently with respect to fine details.

\begin{definition}
	Given a real number $r$ and signed graph $(G, \sigma)$, a mapping  $\varphi$ of the vertices of $G$ to a circle of circumference $r$ is said to be a \emph{circular $r$-coloring} of $(G, \sigma)$ if for each positive edge $uv$ of $(G, \sigma)$, the distance between $\varphi(u)$ and $\varphi(v)$ is at least  1 and for each negative edge $uv$ of $(G, \sigma)$,  the distance between $\varphi(u)$ and and the antipodal of $\varphi(v)$ is at least 1. The \emph{circular chromatic number} of a signed graph $(G, \sigma)$ is defined as $$\chi_c(G, \sigma) = \inf \{r \ge 1: (G, \sigma) \text{ admits a circular $r$-coloring}\}.$$
\end{definition}


One of the first theorems in the study of circular coloring is the notion of \emph{tight cycle} that is used to prove that the circular chromatic number of a graph is a rational number of the form $\dfrac{p}{q}$ where $p$ is at most the number of vertices. An extension to signed graphs, given in \cite{NWZ21} is as follows.

 \begin{proposition}\label{prop:st}
	Any signed graph $(G, \sigma)$ which is not a forest has a cycle with $s$ positive edges and $t$ negative edges such that $$\chi_c(G, \sigma)= \dfrac{2(s+t)}{2a+t}$$ for some integer $a$. In particular, $\chi_c(G, \sigma) = \dfrac{p}{q}$ for some $p \le 2|V(G)|$.
\end{proposition}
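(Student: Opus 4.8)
The plan is to adapt the classical ``tight cycle'' argument for the circular chromatic number of ordinary graphs (Bondy--Hell, Zhu) to the signed setting; the only genuinely new point is to carry the half-turn contributed by negative edges through the final arithmetic.

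First I would record two preliminaries. \emph{(i) The infimum is attained.} On a circle of circumference $r<2$ every two points are at distance less than $1$, so no positive- or negative-edge constraint can be met; since $(G,\sigma)$ is not a forest it has an edge, hence $\chi_c(G,\sigma)\ge 2$. Now if $(G,\sigma)$ admits a circular $r_n$-coloring $\varphi_n$ for a sequence $r_n\downarrow r:=\chi_c(G,\sigma)$, compose each $\varphi_n$ with the scaling $C_{r_n}\to C_r$ to obtain $\psi_n\colon V(G)\to C_r$; then each positive-edge distance and each negative-edge (antipodal) distance is at least $r/r_n$, and by compactness of the space of maps $V(G)\to C_r$ a subsequence of $(\psi_n)$ converges to a map $\varphi$ whose corresponding distances are at least $\lim r/r_n=1$. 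So $\varphi$ is a circular $r$-coloring; fix it. \emph{(ii) The case $r=2$.} A circular $2$-coloring forces positive edges to be antipodal and negative edges to be monochromatic, so it is exactly a sign-respecting proper $2$-coloring; hence $r=2$ iff $(G,\sigma)$ is balanced, and after switching to an all-positive bipartite signature the statement reduces to the classical one for graphs (any cycle $C$ works, with $a=s/2$, since $t$ and $|C|$ are even). From now on assume $r>2$.

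Next I would build the \emph{tight digraph} $D_\varphi$ on $V(G)$: a positive edge $uv$ is tight if $d(\varphi(u),\varphi(v))=1$, a negative edge $uv$ is tight if $d(\varphi(u),\varphi(v)+\tfrac r2)=1$, and for each tight edge one inserts the arc $u\to v$ whose ``step'' $\varphi(v)-\varphi(u)$ equals, as a real number in $(0,r)$, either $1$ (positive edge) or $\tfrac r2+1$ (negative edge); since $r>2$ this orientation is well defined. The heart of the proof is the claim that \emph{$D_\varphi$ contains a directed cycle}. Suppose not; then $D_\varphi$ is acyclic, and letting $g(v)$ be the maximum total step-weight over directed paths of $D_\varphi$ ending at $v$, every arc $u\to v$ satisfies $g(v)\ge g(u)+(\text{its step})$. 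For small $\epsilon>0$ one replaces $\varphi$ by the coloring obtained by shifting each $\varphi(v)$ back by $\epsilon\,g(v)$ and contracting the circle accordingly: the slack on every tight edge strictly increases while non-tight edges keep slack $>1$, so after a uniform rescaling one gets a circular $r'$-coloring with $r'<r$, contradicting minimality of $r$. I expect this perturbation step to be the main obstacle: one must verify that the negative-edge constraints really are relaxed and handle the book-keeping when a directed path wraps around a colour-free point of $C_r$, and it is the hypothesis $r>2$ that keeps all steps inside $(0,r)$ and makes this go through. (Alternatively, the claim can be deduced from the graph version of the tight-cycle lemma by replacing each negative edge with a suitable gadget.)

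Finally I would read off the cycle. A directed cycle $v_0\to v_1\to\cdots\to v_{\ell}=v_0$ of $D_\varphi$ corresponds to a cycle $C$ of $G$; let $s$ and $t$ be its numbers of positive and negative edges, so $s+t=|E(C)|=|V(C)|$. Summing the steps once around $C$ gives $s\cdot 1+t\cdot(\tfrac r2+1)\equiv\varphi(v_0)-\varphi(v_0)=0\pmod r$, i.e. $(s+t)+\tfrac t2\,r=kr$ for some integer $k$, whence $r=\dfrac{2(s+t)}{2k-t}$; putting $a:=k-t$ gives $\chi_c(G,\sigma)=r=\dfrac{2(s+t)}{2a+t}$. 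For the last sentence, take $p=2(s+t)$ and $q=2a+t>0$: then $\chi_c(G,\sigma)=p/q$ with $p=2|V(C)|\le 2|V(G)|$.
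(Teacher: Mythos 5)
The paper itself does not prove Proposition~\ref{prop:st} --- it is quoted from \cite{NWZ21} --- so your argument has to stand on its own. Your overall strategy (attainment of the infimum by compactness, a tight digraph whose arcs carry step $1$ for positive edges and $\tfrac{r}{2}+1$ for negative ones, acyclicity forcing a contradiction, and the telescoping sum around a directed cycle yielding $r=\tfrac{2(s+t)}{2a+t}$ with $a=k-t$) is the right one, and the final arithmetic checks out. But two steps are wrong as written. First, ``$r=2$ iff $(G,\sigma)$ is balanced'' is false in both directions: $(K_3,-)$ is unbalanced yet has $\chi_c=2$ (map all vertices to a single point; each negative edge then realizes antipodal distance exactly $1$), while $(K_3,+)$ is balanced with $\chi_c=3$. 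A circular $2$-coloring forces positive edges to join antipodal points and negative edges to be monochromatic, so $\chi_c=2$ exactly when $(G,\sigma)$ switches to the all-\emph{negative} signature (is antibalanced), not to an all-positive bipartite one. The conclusion you need does survive --- in an antibalanced signed graph every cycle satisfies $(-1)^t=(-1)^{s+t}$, hence $s$ is even and $a=s/2$ works --- but the case must be restated, since otherwise unbalanced graphs with $\chi_c=2$ fall through to your ``$r>2$'' analysis, where the orientation of tight edges is not even well defined.

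Second, and more seriously, your perturbation goes the wrong way. With $g(v)$ the maximum step-weight over directed paths \emph{ending} at $v$, every tight arc $u\to v$ with step $w$ satisfies $g(v)-g(u)\ge w>0$; shifting each $\varphi(v)$ \emph{back} by $\epsilon g(v)$ changes the gap $\varphi(v)-\varphi(u)$ from $w$ to $w-\epsilon\bigl(g(v)-g(u)\bigr)<w$, so a tight positive edge drops to distance strictly less than $1$ and the perturbed map is no longer a circular $r$-coloring at all. You need to shift \emph{forward} by $\epsilon g(v)$ (equivalently, shift back by $\epsilon$ times the longest path weight \emph{starting} at $v$): then every tight gap grows by at least $\epsilon w$, the complementary arc of length $r-w-\epsilon(\cdot)$ stays above $1$ precisely because $r>2$, non-tight edges retain slack for $\epsilon$ small, and a uniform rescaling of the circle then contradicts the minimality of $r$. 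This is a one-line fix, but as written the central step of the proof fails.
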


An equivalent definition of circular $r$-coloring of a signed graph is as follows.

\begin{definition}
	A circular $r$-coloring of a signed graph $(G, \sigma)$ is a mapping $f: V(G) \to [0,r)$ such that for each positive edge $uv$, $$1 \le |f(u)-f(v)| \le r-1$$ and for each negative edge $uv$, $$\text{ either } |f(u)-f(v)| \le \frac{r}{2} -1 \text{ or } |f(u)-f(v)| \ge \frac{r}{2} +1.$$
\end{definition}

It follows from this definition that every signed bipartite graph (not necessarily simple) is circular 4-colorable. Simply assign $0$ to vertices of one part and $2$ to the vertices of the other part. However, this should not mislead to underestimating the study of circular chromatic number of signed bipartite graphs. Since $S(G)$ preserves the homomorphism properties of $G$, it is natural to expect that it can be used to determine the circular chromatic number of $G$. This has indeed been proved to be the case in \cite{NWZ21}.

 \begin{proposition}\label{prop:X(G)-->X(S)G))}
Given a simple graph $G$, we have $\chi_c(S(G)) = 4-\dfrac{4}{\chi_c(G)+1}$.
\end{proposition}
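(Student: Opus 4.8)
The plan is to reduce the statement to the following gadget lemma: \emph{for every real $r$ with $2<r<4$, the signed graph $S(G)$ admits a circular $r$-coloring if and only if $G$ admits a circular $\tfrac{r}{4-r}$-coloring.} Granting this, the proposition follows quickly. Circular $\rho$-colorability of the finite graph $G$ holds precisely when $\rho\ge\chi_c(G)$ (the infimum defining $\chi_c$ of a finite graph is attained, and circular colorability is monotone in the circumference), and $r\mapsto\tfrac{r}{4-r}$ is strictly increasing on $[0,4)$; hence, assuming $G$ has at least one edge so that $\chi_c(G)\ge 2$,
\[
\chi_c(S(G))=\inf\Bigl\{r:\tfrac{r}{4-r}\ge\chi_c(G)\Bigr\}=\inf\Bigl\{r:r\ge\tfrac{4\chi_c(G)}{\chi_c(G)+1}\Bigr\}=\frac{4\chi_c(G)}{\chi_c(G)+1}=4-\frac{4}{\chi_c(G)+1}.
\]
(For $r\le 2$ both sides of the lemma fail, since $S(G)$ contains a negative $4$-cycle and so needs $r\ge\tfrac83$, while $\chi_c(G)\ge 2$; and since $\tfrac{4\chi_c(G)}{\chi_c(G)+1}<4$ the regime $r\ge 4$, where $S(G)$ is trivially circular $4$-colorable as a signed bipartite graph, changes nothing. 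The case that $G$ has no edge is degenerate and handled separately.)

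To prove the lemma, analyze one gadget. A negative $4$-cycle on the pair $\{u,v\}$ with new internal vertices $x,y$ can, by switching at $x$ and/or $y$ — which at the level of colorings merely replaces $f(x)$ and/or $f(y)$ by their antipodes and so leaves $f(u),f(v)$ untouched — be normalized so that the path $uxv$ has both edges positive and the path $uyv$ has exactly one negative edge. Fixing positions $f(u),f(v)\in C_r$, a routine computation with the two forbidden open arcs of radius $1$ shows (here $r<4$ is used) that $x$ can be placed if and only if the circular distance $d(f(u),f(v))\le r-2$, and that $y$ can be placed if and only if $d(f(u),f(v))\ge 2-\tfrac r2$ (the negative edge on the $uyv$-path replaces $f(u)$ by its antipode, and $d(\overline{f(u)},f(v))=\tfrac r2-d(f(u),f(v))$). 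Distinct edges of $G$ produce gadgets that share no edge and only possibly a vertex of $G$, so a circular $r$-coloring of $S(G)$ amounts to a choice of $f$ on $V(G)$ together with an independent extension over each gadget; thus $S(G)$ is circular $r$-colorable if and only if there is $f\colon V(G)\to C_r$ with $2-\tfrac r2\le d(f(u),f(v))\le r-2$ for every edge $uv$ of $G$. Rescaling $C_r$ onto $C_{2k'}$ by the factor $\tfrac{2k'}{r}$, where $k':=\tfrac{r}{4-r}$, turns the interval $[2-\tfrac r2,\ r-2]$ into exactly $[1,\ k'-1]$ (a short computation), so the condition becomes: there is $f\colon V(G)\to C_{2k'}$ with $1\le d(f(u),f(v))\le k'-1$ on every edge.

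It remains to show such an $f$ exists if and only if $\chi_c(G)\le k'$. For the easy direction, a circular $k'$-coloring $g_0\colon V(G)\to C_{k'}$, read verbatim as a map into $C_{2k'}$, works: all its values lie in an arc of length less than $k'$, so adjacent vertices get circular distance (in $C_{2k'}$) equal to their real separation, which lies in $[1,k'-1]$. The crux is the converse: given $f\colon V(G)\to C_{2k'}$ with adjacent distances in $[1,k'-1]$, compose with the two-to-one folding map $p\colon C_{2k'}\to C_{k'}$, $x\mapsto x\bmod k'$. Folding can only decrease distances, and the only way to spoil the lower bound $1$ is for two colors that are nearly antipodal in $C_{2k'}$ to fold onto nearly equal colors in $C_{k'}$; but nearly antipodal means distance close to $k'$, which is exactly what the upper bound $d\le k'-1$ forbids. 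A direct two-case check (writing one color as $0$ and the other in $[0,k')$ or in $[k',2k')$) confirms that $d(a,b)\in[1,k'-1]$ in $C_{2k'}$ forces $d(p(a),p(b))\ge 1$ in $C_{k'}$, so $p\circ f$ is a circular $k'$-coloring of $G$. The main obstacle is precisely this folding step, together with the realization that the lower gadget inequality ($d\ge 2-\tfrac r2$, coming from the single negative edge) and the upper one ($d\le r-2$, coming from the two positive edges) are exactly the two bounds the folding needs; the rest is bookkeeping, and one can check that all the inequalities become simultaneously tight at $r=\tfrac{4\chi_c(G)}{\chi_c(G)+1}$, which is what makes the formula an equality rather than merely an inequality.
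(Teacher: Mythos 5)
Your argument is correct: the reduction of each negative $4$-cycle gadget to the two distance constraints $2-\tfrac r2\le d(f(u),f(v))\le r-2$, the rescaling to $C_{2k'}$ with $k'=\tfrac{r}{4-r}$, and the folding map $C_{2k'}\to C_{k'}$ all check out, and the independence of the gadget extensions is justified since the new vertices have degree $2$. The paper itself gives no proof of this proposition --- it is quoted from \cite{NWZ21}, where it is obtained from a general ``indicator'' lemma of exactly this flavour --- so your write-up is essentially a self-contained instance of the same approach; the only caveat worth recording is the one you already flag, namely that the formula should be read with the convention that $G$ has at least one edge.
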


Observe that if $G$ is a planar graph then so is $S(G)$. Furthermore, $S(G)$ is a bipartite graph in which vertices of one part are all of degree $2$. Let $\mathcal{SPB}_{2}$ be the class of signed bipartite planar simple graphs for each of which in one part every vertex is of degree at most $2$. It is clear that for each planar graph $G$, $S(G)$ is in $\mathcal{SPB}_{2}$ and that core of each signed bipartite graph in $\mathcal{SPB}_{2}$ is a subgraph of $S(G)$ for some planar graph $G$. 

Combining these observations with Proposition~\ref{prop:X(G)-->X(S)G))}, we have the following reformulation of the Four-Color Theorem.

\begin{theorem}{\em [Four-Color Theorem restated]}
	Every signed graph in $\mathcal{SPB}_{2}$ admits a circular $\dfrac{16}{5}$-coloring.
\end{theorem}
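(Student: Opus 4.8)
The plan is to derive this statement directly from the Four-Color Theorem via the machinery already assembled in the excerpt, rather than attempting an independent proof. First I would recall that by Proposition~\ref{prop:X(G)-->X(S)G))}, for any simple graph $G$ we have $\chi_c(S(G)) = 4 - \frac{4}{\chi_c(G)+1}$, which is a monotone increasing function of $\chi_c(G)$. Since the Four-Color Theorem asserts that every planar simple graph $G$ satisfies $\chi(G) \le 4$, and since $\chi_c(G) \le \chi(G)$ always, every planar simple graph has $\chi_c(G) \le 4$. Plugging $\chi_c(G) = 4$ into the formula gives $\chi_c(S(G)) \le 4 - \frac{4}{5} = \frac{16}{5}$. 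So every signed graph of the form $S(G)$ with $G$ planar is circular $\frac{16}{5}$-colorable.

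Next I would handle the reduction from arbitrary members of $\mathcal{SPB}_2$ to graphs of the form $S(G)$. The excerpt already observes that the core of any signed bipartite graph in $\mathcal{SPB}_2$ is a subgraph of $S(G)$ for some planar graph $G$. I would invoke two standard facts about circular coloring and homomorphisms: (i) if $(H,\sigma) \to (H',\sigma')$ then $\chi_c(H,\sigma) \le \chi_c(H',\sigma')$, because a circular $r$-coloring composes with a homomorphism; and (ii) a signed graph has the same circular chromatic number as its core. Combining these: given $(H,\sigma) \in \mathcal{SPB}_2$, its core is a subgraph of some $S(G)$ with $G$ planar, hence $(H,\sigma)$ maps into $S(G)$, hence $\chi_c(H,\sigma) \le \chi_c(S(G)) \le \frac{16}{5}$.

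I should also address the slightly subtle point that $\chi_c(S(G)) \le \frac{16}{5}$ uses the formula of Proposition~\ref{prop:X(G)-->X(S)G))} which gives an exact value $4 - \frac{4}{\chi_c(G)+1}$; here I only need the upper bound direction, so I substitute the worst case $\chi_c(G) = 4$. One caveat: if $G$ has no edges then $S(G)$ is edgeless and the statement is trivial, and if $G$ is a forest the bound is even easier; these degenerate cases can be dismissed in a line. The main (indeed only) nontrivial ingredient is the Four-Color Theorem itself, which is being assumed; everything else is a formal manipulation of the homomorphism and circular-coloring framework set up earlier in the paper. So I do not expect a genuine obstacle — the point of the statement is precisely that it is \emph{equivalent} to the Four-Color Theorem, and the proof here is just the easy direction (4CT $\Rightarrow$ the circular coloring statement), with the converse presumably discussed in the surrounding text.
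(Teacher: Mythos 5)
Your proposal is correct and follows essentially the same route as the paper, which likewise derives the statement by combining Proposition~\ref{prop:X(G)-->X(S)G))} with the observation that the core of any member of $\mathcal{SPB}_{2}$ is a subgraph of $S(G)$ for some planar $G$; you merely spell out the monotonicity and homomorphism steps that the paper leaves implicit.
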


This then naturally leads to two questions, each based on dropping one of the conditions.

\begin{problem}
	What is the best upper bound on the circular chromatic number of signed $2$-degenerate simple graphs?
\end{problem}

\begin{problem}
What is the best upper bound on the circular chromatic number of signed bipartite planar simple graphs?
\end{problem}

In \cite{NWZ21} it is shown that the answer for both questions is $4$. Furthermore, a sequence of signed bipartite $2$-degenerate graphs is built whose circular chromatic number tends to $4$. It is then left as open problem whether one can build an example reaching the exact bound of $4$.

Let $\mathcal{C}_{< 4}$ be the class of signed graphs of circular chromatic number strictly smaller than 4. The questions then are equivalent to ask: 1. Does $\mathcal{C}_{< 4}$ contain the class of all signed 2-degenerate simple graphs? 2. Does $\mathcal{C}_{< 4}$ contain the class of all signed bipartite planar simple graphs?

In this work we answer these questions. In fact, using the number of vertices as a parameter, we provide an improved upper bound for each of the two problems and we show that our bounds are tight. More precisely, we prove the followings.

\begin{theorem}\label{thm:Signed2-degenerate}
	If $(G, \sigma)$ is a signed $2$-degenerate simple graph on $n$ vertices, then $\chi_c(G,\sigma)\leq 4-\dfrac{2}{\lfloor \frac{n+1}{2} \rfloor}.$ Moreover, this upper bound is tight for each value of $n$.
\end{theorem}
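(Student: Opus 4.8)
The plan is to prove the upper bound by induction on $n$, and then to construct, for each $n \geq 4$, a signed $2$-degenerate simple graph on $n$ vertices whose circular chromatic number equals $4 - \frac{2}{\lfloor (n+1)/2 \rfloor}$.

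For the upper bound, set $r = 4 - \frac{2}{\lfloor (n+1)/2 \rfloor}$ and let $k = \lfloor (n+1)/2 \rfloor$, so $r = 4 - \frac{2}{k} = \frac{4k-2}{k}$. I would work with the real-valued formulation of circular coloring: a map $f \colon V(G) \to [0, r)$ where positive edges force $|f(u)-f(v)| \in [1, r-1]$ and negative edges force $|f(u)-f(v)| \le \frac{r}{2}-1$ or $\ge \frac{r}{2}+1$. Since $(G,\sigma)$ is $2$-degenerate, there is a vertex $v$ of degree at most $2$; remove it, apply induction to get a circular $r'$-coloring of $(G-v, \sigma)$ for the appropriate smaller $r'$, and then argue one can rescale/extend to a circular $r$-coloring of $(G,\sigma)$. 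The key local fact to establish is: given the at most two already-colored neighbors of $v$ (with prescribed signs on the two edges), the set of \emph{forbidden} positions for $f(v)$ on the circle $C_r$ is a union of at most two arcs, each of length $2$ (for a positive edge, the forbidden arc around the neighbor's color has length less than $2$; for a negative edge it is the arc of length less than $2$ around the antipode) — wait, more precisely each edge forbids an open arc of length $2$ minus the contribution; since $r = \frac{4k-2}{k} < 4$, two arcs of total length $4$ could in principle cover $C_r$, so the naive bound fails and this is exactly where the parameter $k$ must be exploited. The honest approach is the standard one for these "circular chromatic number via degeneracy" arguments: track not just the coloring but some potential/slack quantity across the induction (for instance, fix one vertex's color and measure how tightly the partial coloring is constrained), and choose the induction hypothesis so that adding a degree-$\le 2$ vertex preserves enough slack. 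I expect the cleanest route is to first handle the case where $v$ has degree $\le 1$ trivially, and for $\deg(v) = 2$ to switch signs (using Theorem~\ref{Thm:Zaslavski}) so both edges at $v$ are, say, positive, reducing to a statement about ordinary circular coloring of the underlying multigraph but on the doubled circle, then invoke or adapt the known bound $\chi_c \le 2 + \frac{2}{\lfloor n/2 \rfloor}$-type estimates for $2$-degenerate graphs.

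\textbf{The lower bound construction.} For tightness I would build an explicit family. A natural candidate, guided by Proposition~\ref{prop:st} (which says $\chi_c = \frac{2(s+t)}{2a+t}$ is achieved on some cycle with $s$ positive and $t$ negative edges), is to take a cycle on roughly $n$ vertices with a carefully chosen number of negative edges so that the forced value $\frac{2(s+t)}{2a+t}$ equals $\frac{4k-2}{k}$, and then, if necessary, pad with a few pendant vertices or subdivide to reach exactly $n$ vertices without decreasing the circular chromatic number (pendant vertices keep the graph $2$-degenerate and do not help lower $\chi_c$, while they let us hit every $n$ rather than only $n$ of a fixed parity). Concretely one wants $\frac{2(s+t)}{2a+t} = 4 - \frac{2}{k}$; writing this as $\frac{s+t}{a} \cdot \frac{2a}{2a+t}$ and solving, one looks for a tight signed cycle of length about $2k-1$ with a single negative edge (so $t=1$), or length $4k-2$ with $t = 2$, chosen so the formula evaluates exactly; the parity split $\lfloor (n+1)/2 \rfloor$ strongly suggests the extremal graph on $n$ and on $n+1$ vertices share a core and differ by one pendant vertex. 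After fixing the construction I would verify $\chi_c \ge 4 - \frac{2}{k}$ by exhibiting the tight cycle and using Proposition~\ref{prop:st} to certify that no smaller $r$ works, and verify $\chi_c \le 4 - \frac{2}{k}$ either from the general upper bound just proved or by an explicit coloring.

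\textbf{Main obstacle.} The hard part will be the inductive step of the upper bound: when $v$ has degree exactly $2$, two forbidden arcs of length near $2$ on a circle of circumference just under $4$ can cover almost everything, so a bare induction on "$(G,\sigma)$ is circular $r$-colorable" cannot close. The real work is finding the right strengthened invariant — most likely a statement of the form "there is a circular $r$-coloring in which some designated vertex, or every vertex, avoids a prescribed short arc" or a weighting/discharging bookkeeping on how the $\frac{2}{k}$ deficiency is distributed — so that removing and re-inserting a degree-$\le 2$ vertex provably maintains it. Getting the dependence on $n$ to come out as $\lfloor (n+1)/2 \rfloor$ rather than something weaker (the paper's own earlier bound only gave sequences \emph{tending} to $4$) is precisely the delicate point, and I expect it requires processing the $2$-degeneracy ordering two vertices at a time.
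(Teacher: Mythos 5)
Your proposal has two genuine gaps, one in each half of the theorem. For the upper bound, you correctly diagnose that a naive induction on a degree-$2$ vertex fails (two forbidden arcs of total length near $4$ can cover a circle of circumference just under $4$), but the ``strengthened invariant'' you hope for is not how the paper closes this, and I don't see how your sketch would close it either. The paper separates the argument into a \emph{qualitative} and an \emph{arithmetic} step. Qualitatively (Theorem~\ref{thm:2-vertex}), given a circular $(4-\epsilon)$-coloring of $\hat G - w$, one normalizes so that the two neighbors $u,v$ of $w$ satisfy $\varphi(u)=0$ and $\varphi(v)\in[2-\frac{\epsilon}{2},2)$, then rescales the circle to circumference $4-\frac{\epsilon}{2}$ and inserts an interval of length $\frac{\epsilon}{4}$ between them; this yields a circular $(4-\frac{\epsilon}{4})$-coloring in which $u$ and $v$ are at distance at least $2$, so $w$ can be placed at their midpoint. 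Note that this loses a factor of $4$ in $\epsilon$ at every step, so it could never give the stated quantitative bound by itself. The bound $4-\frac{2}{\lfloor (n+1)/2\rfloor}$ is instead obtained in one stroke from Proposition~\ref{prop:st}: once $\chi_c(G,\sigma)<4$ is known, write $\chi_c=\frac{p}{q}$ with $p$ even and $p\le 2n$; then $p<4q$ forces $p\le 4q-2$, so $\chi_c\le\min\{\frac{2n}{q},\,4-\frac{2}{q}\}$, and optimizing over $q$ gives exactly the claimed bound. Your plan never exploits this rationality constraint, which is the actual source of the $\lfloor\frac{n+1}{2}\rfloor$.

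For tightness, your proposed construction --- a signed cycle with carefully chosen signs plus pendant vertices --- cannot work. A standalone signed cycle has circular chromatic number at most $3$ (balanced odd cycles give $2+\frac{1}{k}$, unbalanced even cycles give $\frac{2\ell}{\ell-1}$, and unbalanced odd cycles and balanced even cycles give $2$), and pendant vertices do not increase it; so no such graph gets anywhere near $4-\frac{2}{k}$. You have misread Proposition~\ref{prop:st}: the tight cycle realizes the formula $\frac{2(s+t)}{2a+t}$ only because its coloring is constrained by the \emph{whole} graph, not because the cycle alone has that circular chromatic number. The paper's extremal family $\Omega_i$ is built quite differently: starting from $(K_3,+)$, one repeatedly adds a twin copy of the last vertex and then a new vertex joined to the vertex and its twin by one positive and one negative edge. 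The lower bound $\chi_c(\Omega_i)\ge 4-\frac{2}{i+1}$ is proved by induction via the strengthened claim that every optimal coloring has a Hamiltonian tight cycle, which forces the twin pair to receive the same color, making the next added vertex uncolorable on any circle of circumference less than $4$; a gap in the possible rationals with bounded numerator then pins down the exact value.
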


\begin{theorem}\label{thm:SignedBipartitePlanar}
	If $(G, \sigma)$ is a signed bipartite planar simple graph on $n$ vertices, then $\chi_c(G,\sigma)\leq 4-\dfrac{4}{\lfloor \frac{n+2}{2} \rfloor}$. Moreover, this upper bound is tight for each value of $n$.
\end{theorem}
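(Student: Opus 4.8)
\textbf{Proof proposal for Theorem~\ref{thm:SignedBipartitePlanar}.}

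The plan is to split the statement into its two halves: the upper bound $\chi_c(G,\sigma)\le 4-\frac{4}{\lfloor (n+2)/2\rfloor}$ for every signed bipartite planar simple graph on $n$ vertices, and the tightness construction showing the bound cannot be improved for any $n\ge 4$. For the upper bound, I would argue by induction on $n$, using the structure of bipartite planar graphs. A bipartite planar simple graph has a vertex of degree at most $2$ (indeed, average degree $<4$, and since there are no triangles one can say more), so one natural route is to mimic the $2$-degenerate argument of Theorem~\ref{thm:Signed2-degenerate} but squeeze out the extra factor of $2$ in the denominator that planarity/bipartiteness buys us. Concretely, I expect the right move is to set $r = 4-\frac{4}{k}$ with $k=\lfloor (n+2)/2\rfloor$ and show one can extend a partial circular $r$-coloring across a low-degree vertex; the colors available on the circle $C_r$ form a union of arcs, and the key arithmetic fact is that with $r$ of this form the ``forbidden'' region around each already-colored neighbor (an arc of length $2$ for a positive edge, or two arcs totalling length $2$ placed antipodally for a negative edge) leaves enough room. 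Because each newly uncolored vertex we peel off in the bipartite setting can be charged against \emph{two} vertices rather than one (its two sides alternate), the denominator improves from $\lfloor (n+1)/2\rfloor$-type to $\lfloor (n+2)/2\rfloor$-type and the numerator of the ``deficiency'' from $2$ to $4$; making this charging precise is the first real task.

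An alternative, and probably cleaner, route for the upper bound is to go through $S(G)$ and Proposition~\ref{prop:X(G)-->X(S)G))}. Any signed bipartite planar simple graph in its core form is a subgraph of $S(H)$ for a planar graph $H$ (as observed in the excerpt for the class $\mathcal{SPB}_2$); more generally I would try to reduce a general signed bipartite planar $(G,\sigma)$ to a ``$S$-like'' gadget form by a switching and a homomorphism argument, then invoke that $\chi_c(S(H)) = 4-\frac{4}{\chi_c(H)+1}$ together with the bound $\chi_c(H)\le \frac{n_H}{\text{something}}$ coming from Proposition~\ref{prop:st} (which says $\chi_c = p/q$ with $p\le 2|V|$) applied to $G$ directly. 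In fact Proposition~\ref{prop:st} already gives $\chi_c(G,\sigma)=\frac{2(s+t)}{2a+t}$ for a cycle with $s+t\le n$ edges; if this fraction is $<4$ then it is at most $4-\frac{2}{2a+t}$, and one needs to bound $2a+t$ from below in terms of $n$ using bipartiteness and planarity (a bipartite planar graph on $n$ vertices has at most $2n-4$ edges, so a shortest ``tight'' structure is short). I suspect combining Proposition~\ref{prop:st} with a careful count of how long the tight cycle must be is the quickest path: the bipartite condition forces $s+t$ even and forces a relation between $s$, $t$ and the sign of the cycle, and planarity bounds the relevant lengths, yielding exactly $\lfloor (n+2)/2\rfloor$ in the denominator.

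For tightness, I would construct, for each $n\ge 4$, a signed bipartite planar simple graph $(G_n,\sigma_n)$ on $n$ vertices with $\chi_c(G_n,\sigma_n) = 4-\frac{4}{\lfloor (n+2)/2\rfloor}$. The natural candidates are $S(H)$ for $H$ a suitably small planar graph (e.g. a path or cycle $C_k$), since by Proposition~\ref{prop:X(G)-->X(S)G))} we have $\chi_c(S(C_k)) = 4-\frac{4}{\chi_c(C_k)+1}$, and $\chi_c(C_k) = 2+\frac{1}{\lfloor k/2\rfloor}$ for odd $k$ (and $=2$ for even $k$), so the values $4-\frac{4}{\chi_c(C_k)+1}$ run through a discrete set approaching $4$; one then pads with a few pendant or subdivided edges to hit every $n$ and to match the floor function exactly, checking parity cases $n$ even vs.\ odd separately. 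The lower bound $\chi_c\ge 4-\frac{4}{\lfloor(n+2)/2\rfloor}$ for these examples is the crux: one exhibits an explicit bad cycle and applies Proposition~\ref{prop:st}, or argues directly that any circular $r$-coloring with $r$ smaller than the claimed value would force, along a long alternating path of negative edges, a ``drift'' of color that cannot close up — essentially the same winding-number obstruction used for $\chi_c(C_k)$ of ordinary graphs, transported through the $S(\cdot)$ construction.

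\textbf{Main obstacle.} I expect the hard part to be the tightness direction, specifically pinning the construction so that it works for \emph{every} $n\ge 4$ (not just an infinite subsequence) with the exact floor expression, and proving the matching lower bound $\chi_c \ge 4-\frac{4}{\lfloor (n+2)/2\rfloor}$ rather than merely $\ge 4-\varepsilon$. Getting the parity bookkeeping in $\lfloor (n+2)/2\rfloor$ to line up with the $\chi_c(C_k)$ values under the transformation $x\mapsto 4-\frac{4}{x+1}$, and verifying planarity and bipartiteness of the padded graph, is where the real care is needed; the upper bound, by contrast, should follow fairly mechanically once the correct reduction (induction on a low-degree vertex, or a direct application of Proposition~\ref{prop:st} with a planar edge-count) is set up.
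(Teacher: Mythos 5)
There are two genuine gaps, one in each half of the statement.

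For the upper bound, the crux is the strict inequality $\chi_c(G,\sigma)<4$ for \emph{every} signed bipartite planar simple graph, and your proposal never establishes it. Your first route starts from the claim that a bipartite planar simple graph has a vertex of degree at most $2$; this is false (the cube $Q_3$ is bipartite, planar and $3$-regular), so there is no low-degree vertex to peel off and the $2$-degenerate argument does not transfer. Your second route, via $S(H)$ and Proposition~\ref{prop:X(G)-->X(S)G))}, only covers the class $\mathcal{SPB}_2$ where one side has all degrees at most $2$; a general signed bipartite planar graph admits no such reduction, and the ``switching and homomorphism argument'' you invoke to get one is exactly the missing content. Your third route, applying Proposition~\ref{prop:st} directly, is explicitly conditional (``if this fraction is $<4$\dots''): once $\chi_c<4$ is known, bipartiteness forces the tight cycle to have even length, so $p=4k$ with $2k\le n$ and $q\ge k+1$, and the bound $4-\frac{4}{k+1}$ falls out exactly as you sketch --- but the hypothesis $\chi_c<4$ is the whole battle. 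The paper proves it by a minimal-counterexample argument of a quite different flavor: the bipartite folding lemma reduces to the case where every face is a negative $4$-cycle, two contraction-type operations ($F_u$, collapsing the neighborhood of a vertex, and $F_{uv}$, an edge gadget) are shown to preserve the class $\mathcal{C}_{<4}$, and a final $K_{3,3}$-planarity contradiction rules out separating positive $4$-cycles. None of this structural machinery appears in your plan.

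For tightness, your candidates $S(C_k)$ undershoot the bound by a wide margin: $S(C_k)$ has $3k$ vertices but $\chi_c(S(C_k))=4-\frac{4}{\chi_c(C_k)+1}$, e.g.\ $S(C_3)$ has $9$ vertices and circular chromatic number $3$, whereas the bound to be attained for $n=9$ is $\frac{16}{5}$. The $S(\cdot)$ construction spends three vertices per edge of $H$, so its circular chromatic number, governed by $\chi_c(H)$, can never keep pace with $4-\frac{4}{\lfloor(n+2)/2\rfloor}$; padding with pendant vertices only worsens the gap. The paper's extremal examples $\Gamma^*_i$ are instead obtained by subdividing an edge of the graphs $\Omega_{i-1}$ from the $2$-degenerate construction (equivalently, they are cores of the graphs $\Gamma_i$ of the cited reference), and the matching lower bound is proved by the Hamiltonian-tight-cycle argument rather than by a winding-number count on $S(C_k)$.
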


The paper is organized as follows. In the next section we
prove Theorem~\ref{thm:Signed2-degenerate}.
In Section~\ref{sec:C<4} we present two graph operations each of which is $\mathcal{C}_{< 4}$-closed. Using this, in Section~\ref{sec:BipartitePlanar}, we prove Theorem~\ref{thm:SignedBipartitePlanar}. Finally in the last section we mention some related problems.

\section{Signed $2$-degenerate simple graphs}\label{sec:2degenerate}

In this section, we first prove the following theorem which, in particular, implies that circular chromatic number of any signed 2-degenerate simple graph is strictly smaller than 4. Then using the notion of tight cycle and Proposition~\ref{prop:st}, we will conclude Theorem~\ref{thm:Signed2-degenerate}.

\begin{theorem}\label{thm:2-vertex}
	Let $\hat{G}$ be a signed simple graph with a vertex $w$ of degree $2$. If the signed graph $\hat{G}-w$ has circular chromatic number strictly less than $4$, then $\hat{G}$ also has circular chromatic number strictly less than $4$.
\end{theorem}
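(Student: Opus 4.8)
The plan is to take a circular $r$-coloring $f$ of $\hat{G} - w$ with $r < 4$ (which exists by hypothesis), and extend it to $w$, possibly after slightly enlarging $r$ but keeping it below $4$. The key point is that $\hat{G}$ is simple, so $w$ has exactly two neighbors $u_1, u_2$ (if $w$ had degree less than $2$, extension is trivial, so we may assume exactly two distinct neighbors), and each edge $wu_i$ carries a sign. Switching at $w$ if necessary, I would first normalize so that, say, both edges are positive, or handle the sign cases directly: the constraint imposed by a negative edge $wu_i$ is the same as that of a positive edge to the antipode of $f(u_i)$. So in all cases the set of \emph{forbidden} positions for $f(w)$ on the circle $C_r$ is the union of two arcs, each of length strictly less than $2$ (an open arc of "radius" $1$ around a point, i.e. length $2$, but we need to be careful about the closed/open boundary — actually each forbidden set is an open arc of length $2$ around $f(u_i)$ or its antipode). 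Two such arcs have total length less than $4 \le r$ only when $r$ is close to $4$, so for a fixed $r<4$ this does \emph{not} immediately leave room; this is exactly why the statement is nontrivial and why one must allow $r$ to vary.

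The cleaner approach is therefore \emph{not} to fix $r$ but to argue as follows. Suppose $\hat{G}-w$ has a circular $r_0$-coloring for some $r_0 < 4$. By Proposition~\ref{prop:st} applied to $\hat{G}-w$ (or by a compactness/tight-cycle argument), we may take $r_0 = \chi_c(\hat{G}-w) < 4$ and fix an actual circular $r_0$-coloring $f$. Now I want to show $\chi_c(\hat{G}) < 4$ as well. Consider increasing the circumference: for $r \ge r_0$, the coloring $f$ (viewed appropriately, e.g. by scaling or by keeping coordinates and noting distances only increase the available slack) remains a valid circular $r$-coloring of $\hat{G}-w$, and as $r \to 4^-$ the two forbidden arcs around $f(u_1)$ and $f(u_2)$ each have length approaching $2$ from... — here one must check the exact length. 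In the coordinate formulation, a positive edge forbids an open interval of length $2$ (namely $(f(u_i)-1, f(u_i)+1)$ mod $r$), and a negative edge forbids the complement of such an interval around the antipode, which is again an arc; the measure of each forbidden set is exactly $2$ regardless of $r$. Two forbidden arcs of length $2$ cover at most length $4$; if $r > 4$ there is always room, but we need $r < 4$. The resolution is that the two arcs overlap unless $f(u_1)$ and $f(u_2)$ (or the relevant antipodes) are nearly antipodal on $C_r$; so the bad case is precisely when the two forbidden arcs are (almost) disjoint and (almost) cover $C_r$.

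Hence the main obstacle — and the heart of the proof — is this bad case: $r_0$ is very close to $4$, and $f(u_1), f(u_2)$ are positioned so that the two length-$2$ forbidden arcs nearly tile $C_{r_0}$. To handle it, I would \emph{perturb} the coloring $f$ on $\hat{G}-w$ rather than only $w$: since $\chi_c(\hat{G}-w) = r_0 < 4$ strictly, there is a genuine gap, and one can choose $r_1$ with $r_0 \le r_1 < 4$ together with a circular $r_1$-coloring $f'$ of $\hat{G}-w$ in which $f'(u_1)$ and $f'(u_2)$ are \emph{not} antipodal — for instance by a small generic shift of one color class, using that $\hat{G}-w$ has a coloring with strictly positive slack on every edge (this is where $r_0<4$, as opposed to $r_0=4$, is used essentially). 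With $f'(u_1)$ and $f'(u_2)$ bounded away from antipodal by some $\delta>0$, the two forbidden arcs of total length $4$ on a circle of circumference $r_1$ close to $4$ fail to cover a sub-arc of length roughly $4 - r_1 + (\text{overlap} \ge \text{something}(\delta))>0$, leaving a legal spot for $f'(w)$, possibly after increasing $r_1$ slightly but keeping it below $4$. Finally, to conclude Theorem~\ref{thm:2-vertex} in the clean form stated, I would phrase the perturbation quantitatively: given that $\hat{G}-w$ admits a circular $(4-\epsilon)$-coloring, produce a circular $(4-\epsilon')$-coloring of $\hat{G}$ for some $\epsilon'>0$ depending on $\epsilon$ (and, for the quantitative Theorem~\ref{thm:Signed2-degenerate}, one would track $\epsilon'$ in terms of $\epsilon$ and the number of vertices, then invoke Proposition~\ref{prop:st} to snap the bound to the stated rational value). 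The routine parts are the case analysis on the two signs of $wu_1, wu_2$ (all reduce to the positive case by switching at $w$) and the interval-length bookkeeping; the one genuinely delicate step is guaranteeing the non-antipodal perturbation, which is exactly where the strict inequality $\chi_c(\hat G - w)<4$ is consumed.
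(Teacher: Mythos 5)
Your skeleton --- two forbidden arcs of length $2$ for $w$, the bad case being that the images of the two neighbours are nearly antipodal, and the use of the strict inequality $\chi_c(\hat G - w)<4$ to create slack for a perturbation --- is the same as the paper's. But the step you yourself flag as ``the one genuinely delicate step'' is exactly where there is a genuine gap, and the one quantitative claim you make about it is wrong. Writing $A_1,A_2$ for the two forbidden open arcs, the uncovered length is $r_1-|A_1\cup A_2|=r_1-4+|A_1\cap A_2|$, not $4-r_1+|A_1\cap A_2|$. If the two centres are at distance $r_1/2-\delta$ with $0<\delta\le (4-r_1)/2$, the overlap is exactly $4-r_1$ and the uncovered length is exactly $0$: making the two points merely ``non-antipodal by some $\delta>0$'' buys nothing by itself. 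One needs $\delta>(4-r_1)/2$, i.e.\ a separation gain comparable to $4-r_1$ (or one must carry out the rescaling you mention only parenthetically and check it against $\delta$). Moreover you never establish that a perturbation achieving any particular $\delta$ exists: a ``small generic shift of one color class'' is not a defined operation here, and moving a single vertex is limited by the minimum slack over its incident edges, which after scaling a $(4-\epsilon)$-coloring to circumference $4-\frac{\epsilon}{2}$ is only $\frac{\epsilon}{8-2\epsilon}$; whether that beats the required $\delta$ is precisely the computation the proof must contain, and it is absent. A generic (random-direction) shift also fails --- the vertex must be moved \emph{toward} the other neighbour along the shorter arc.

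The gap is fixable along your lines: after scaling, every edge has slack at least $\frac{\epsilon}{8-2\epsilon}>\frac{\epsilon}{8}$; moving $u_1$ and $u_2$ toward each other by this amount reduces their distance by $\frac{\epsilon}{4-\epsilon}>\frac{\epsilon}{4}$, which takes it from at most $2-\frac{\epsilon}{4}$ to below $(4-\frac{\epsilon}{2})-2$, the exact threshold at which the two arcs leave a gap (the case where $u_1u_2$ is itself an edge needs a separate check). The paper instead avoids touching the endpoints' own constraints by \emph{inserting} an interval of length $\frac{\epsilon}{4}$ into the scaled circle at a point between the images of $u$ and $v$: positive-edge distances can only grow, antipodes move by only $\frac{\epsilon}{8}$, which the slack $\frac{\epsilon}{8-2\epsilon}$ absorbs, and $v$'s image is pushed to distance at least $2$ from $u$'s, so $w$ fits at the point $1$. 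Either way, the quantitative comparison between the slack gained from $\epsilon$ and the separation needed is the substance of the proof, and your write-up does not supply it.
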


\begin{proof}
	Let $\hat{G}$ be a minimum counterexample to the theorem. Then it follows immediately that $G$ is connected and has no vertex of degree 1. Let $u$ and $v$ be the two neighbors of $w$. Since circular chromatic number is invariant under switching, and without loss of generality, we may assume both $uw$ and $vw$ are positive edges in $\hat{G}$. 
	
	Let $\hat{G}'=\hat{G}-w$ and let $\epsilon$ be a positive real number smaller than $2$, such that $\hat{G}'$ admits a circular $(4-\epsilon)$-coloring. Let $C$ be the circle of circumference $4-\epsilon$. 

    By rotational symmetries of the circle we can assume that $\varphi(u)=0$. Then considering symmetries along the diameters of the circle, in particular the one that contains 0, we may assume $ \varphi(v) \geq 2-\frac{\epsilon}{2} $. Furthermore, we may assume $\varphi(v) < 2$ as otherwise we can complete $\varphi$ to a coloring of $\hat{G}$ simply by setting $\varphi(w)=1$.

    Our aim is to present a circular $(4-\frac{\epsilon}{4})$-coloring $\psi$ of $\hat{G}$. To this end, first we do a uniform scaling of the circle $C$ to a circle $C'$ to get a circular $(4-\frac{\epsilon}{2})$-coloring $\varphi'$ of $\hat{G}'$.
	More precisely $\varphi': V(\hat{G}')\to [0, 4-\frac{\epsilon}{2})$ is defined as follows. $$\varphi'(x)=\dfrac{4-\frac{\epsilon}{2}}{4-\epsilon}\varphi(x).$$
	
	The mapping $\varphi'$ has the property that for a positive edge $xy$ the points $\varphi'(x)$ and $\varphi'(y)$ are at distance (on $C'$) at least $1+\frac{\epsilon}{8-2\epsilon}$ and that the same holds for the distance between $\varphi'(x)$ and the antipodal of $\varphi'(y)$ whenever $xy$ is a negative edge. Observe that $\varphi'(u)=0$ and $\varphi'(v)\ge 2-\frac{\epsilon}4$.
	
	Next we introduce a circular $(4-\frac{\epsilon}{4})$-coloring of $\hat{G}'$ by inserting an interval of length $\frac{\epsilon}{4}$ inside $C'$ to obtain a circle $C''$ of circumference $4-\frac{\epsilon}{4}$. Assuming this interval is inserted at point $1-\frac{\epsilon}{8}$ of $C'$, the new coloring $\psi$ of $\hat{G}'$ is defined as follows.
	
	$$\psi(x) = \begin{cases} 
		\varphi'(x), &\text{ if $\varphi(x)< 1-\dfrac{\epsilon}{8}$}, \\ \cr
		\varphi'(x)+\dfrac{\epsilon}{4}, &\text{ if $\varphi(x)\geq 1-\dfrac{\epsilon}{8}$}. 
	\end{cases}
	$$
		
	We need to verify that $\psi$ is a circular coloring of $\hat{G}'$. For a positive edge $xy$, it's immediate to see that the distance of $\psi(x)$ and $\psi(y)$ is at least $1$, because in changing $C'$ to $C''$ the distance between two points does not decrease. For a negative edge $xy$, we note that since the diameter of the circle is changed, the antipodal of each point is shifted by $\frac{\epsilon}{8}$. To be more precise, if $a$ is a point of circle $C'$ with $a_1$ as its antipodal, and $a'$ and $a'_1$ are the images of these points at $C''$ after inserting an interval of length $\frac{\epsilon}{4}$, the antipodal of $a'$ on $C''$ is at distance $\frac{\epsilon}{8}$ from $a'_1$ (see Figure~\ref{fig:C'} and \ref{fig:C''}). Since in $C'$ the distance between $\varphi'(x)$ and the antipodal of $\varphi'(y)$ is at least $1+\frac{\epsilon}{8-2\epsilon}$, even after this shift of $\frac{\epsilon}{8}$ the distance between $\psi(x)$ and the antipodal of $\psi(y)$ is at least 1 and, therefore, $\psi$ is a circular $(4-\frac{\epsilon}{4})$-coloring. 

	\begin{figure}[ht]
		\centering 
		\begin{minipage}[t]{.48\textwidth}
			\centering 
			\begin{tikzpicture}
				[scale=.4]
				\draw [line width=.05mm, black] (0,0) circle (4cm);
				\draw[rotate=20] (0,4)  node[line width=0mm, circle, fill, inner sep=1.2pt, label=above: \scriptsize{$a$} ](a) {};
				\draw[rotate=0] (0,4)  node[line width=0mm, circle, fill, inner sep=1.2pt, label=below: \scriptsize{$0$} ] {};
				\draw[rotate=20] (0,-4)  node[line width=0mm, circle, fill, inner sep=1.2pt, label=below: \scriptsize{$a_1$} ](a1) {};
				\draw [dashed, line width=0.2mm, black] (a) -- (a1);
			\end{tikzpicture}
			\caption{Circle $C'$ with $r'=4-\dfrac{\epsilon}{2}$}
			\label{fig:C'}
		\end{minipage}
		\begin{minipage}[t]{.48\textwidth}
			\centering 
			\begin{tikzpicture}
				[scale=.4]
				\draw [line width=.06mm, black] (0,0) circle (4cm);
				\draw[rotate=0] (0,4)  node[line width=0mm, circle, fill, inner sep=1.2pt, label=below: \scriptsize{$0$} ] {};
				\draw[rotate=20] (0,4)  node[line width=0mm, circle, fill, inner sep=1.2pt, label=above: \scriptsize{$a'$}] (a'){};
				\draw[rotate=20] (0,-4)  node[line width=0mm, circle, fill, inner sep=1.2pt, label={below right}: \scriptsize{$\bar{a}'$}] (a1){};
				\draw[rotate=10] (0,-4)  node[line width=0mm, circle, fill, inner sep=1.2pt, label=below: \scriptsize{$a'_1$}] (a'1){};
				\draw [dashed, line width=0.2mm, black] (a') -- (a1);
				\draw [black, line width=.6mm, domain=6:-6] plot ({4*cos(\x)}, {4*sin(\x)}) node [left] {$\frac{\epsilon}{4}$};
					
			\end{tikzpicture}
			\caption{Circle $C''$ with $r''=4-\dfrac{\epsilon}{4}$}
			\label{fig:C''}
		\end{minipage}
	\end{figure}

	Finally, as $\psi(u)=0$ and $\psi(v)\ge 2$, we may complete the circular $(4-\frac{\epsilon}{4})$-coloring $\psi$ of $\hat{G}'$ to $\hat{G}$  simply by setting $\psi(w)=1$.
\end{proof}

We observe that in this proof for two vertices $x$ and $y$ of $\hat{G}-w$ if we have $\varphi(x)=\varphi(y)$, then we have $\psi(x)=\psi(y)$.

From the statement of this theorem, it follows immediately that every signed $2$-degenerate simple graph admits a $(4-\epsilon)$-coloring for some positive real number $\epsilon$. Next we use the notion of tight cycle to give a precise upper bound in terms of the number of vertices.

\begin{theorem}\label{thm:2-degenerate}
	For any signed $2$-degenerate simple graph $(G, \sigma)$ on $n$ vertices, we have:
	\begin{itemize}
		\item For each odd value of $n$, $\chi_c(G, \sigma)\leq  4-\dfrac{4}{n+1}$,
		\item For each even value of $n$, $\chi_c(G, \sigma)\leq  4-\dfrac{4}{n}$.
	\end{itemize}
\end{theorem}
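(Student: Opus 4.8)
The plan is to feed the strict bound $\chi_c(G,\sigma)<4$ --- which follows from Theorem~\ref{thm:2-vertex} by the induction on $|V(G)|$ indicated just above it (a $2$-degenerate graph always has a vertex $w$ of degree at most $2$; if $\deg(w)=2$ apply Theorem~\ref{thm:2-vertex}, and if $\deg(w)\le 1$ then $\chi_c(\hat G-w)<4$ extends trivially to $\hat G$) --- into the tight-cycle description supplied by Proposition~\ref{prop:st}. First dispose of the degenerate case: if $(G,\sigma)$ is a forest, then after switching all edges may be taken positive, so $\chi_c(G,\sigma)\le 2$, which is at most both claimed bounds for every $n\ge 1$; hence from now on $(G,\sigma)$ is not a forest.

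By Proposition~\ref{prop:st} there is a cycle of $(G,\sigma)$ with $s$ positive and $t$ negative edges such that $\chi_c(G,\sigma)=\frac{2(s+t)}{2a+t}$ for some integer $a$. Put $\ell=s+t$, the length of this cycle, so that $\ell\le n$, and put $q=2a+t$; since $\chi_c(G,\sigma)>0$ and $\ell>0$, $q$ is a positive integer. The strict inequality $\chi_c(G,\sigma)<4$ rewrites as $2\ell<4q$, i.e.\ $q>\ell/2$. Now split on the parity of $\ell$. If $\ell=2m$ then $q\ge m+1$, so $\chi_c(G,\sigma)=\frac{2\ell}{q}\le\frac{4m}{m+1}=4-\frac{4}{m+1}$. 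If $\ell=2m+1$ then again $q\ge m+1$, so $\chi_c(G,\sigma)\le\frac{2(2m+1)}{m+1}=4-\frac{2}{m+1}=4-\frac{4}{\ell+1}$. Thus an \emph{odd} tight cycle of length $\ell$ already forces a bound of exactly the target shape.

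It remains to combine $\ell\le n$ with the relative parities of $\ell$ and $n$. Suppose $n$ is odd. If $\ell$ is odd then $\chi_c(G,\sigma)\le 4-\frac{4}{\ell+1}\le 4-\frac{4}{n+1}$, and if $\ell$ is even then $\ell\le n-1$, whence $\chi_c(G,\sigma)\le 4-\frac{4}{m+1}\le 4-\frac{4}{(n+1)/2}=4-\frac{8}{n+1}\le 4-\frac{4}{n+1}$. Suppose $n$ is even. If $\ell$ is odd then $\ell\le n-1$, so $\chi_c(G,\sigma)\le 4-\frac{4}{\ell+1}\le 4-\frac{4}{n}$, and if $\ell$ is even then $\chi_c(G,\sigma)\le 4-\frac{4}{m+1}\le 4-\frac{4}{n/2+1}=4-\frac{8}{n+2}\le 4-\frac{4}{n}$, the last step using $n\ge 2$. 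This exhausts the cases.

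All the real content is borrowed: the circular $(4-\epsilon)$-colorability from Theorem~\ref{thm:2-vertex} and the rational / tight-cycle structure from Proposition~\ref{prop:st}; what remains is elementary arithmetic. I do not expect a genuine obstacle, but there are two points to get right. The first is that one must use the \emph{strict} inequality $\chi_c(G,\sigma)<4$, not merely $\le 4$, in order to pass from $q>\ell/2$ to $q\ge m+1$ when $\ell=2m$ (for $q=m$ would correspond precisely to $\chi_c=4$). The second is simply to notice that the extremal case is when the shortest available tight cycle is as long as possible and of the same parity as $n$, so that the parity bookkeeping between $\ell$ and $n$ must be carried out carefully rather than absorbed into a single estimate.
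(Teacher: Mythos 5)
Your proposal is correct and follows essentially the same route as the paper: establish $\chi_c(G,\sigma)<4$ via Theorem~\ref{thm:2-vertex}, invoke Proposition~\ref{prop:st} to write $\chi_c(G,\sigma)=\frac{2\ell}{q}$ with $\ell\le n$, and exploit the strict inequality to force $q\ge\lfloor \ell/2\rfloor+1$. The paper organizes the final arithmetic as $\max_q\min\{\frac{2n}{q},\,4-\frac{2}{q}\}$ rather than by splitting on the parity of $\ell$, but this is only a cosmetic difference in the same calculation.
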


\begin{proof}
As stated in Proposition~\ref{prop:st}, we know that $\chi_c(G, \sigma)=\frac{p}{q}$ where $p$ is twice the length of a cycle in $G$. Thus $p$ is an even integer satisfying $p\leq 2n$. Since $\chi_c(G, \sigma)<4$ we have $\frac{p}{q}<4$, in other words, $p<4q$. As $p$ and $q$ are integers, and moreover $p$ is an even integer, we have $p\leq 4q-2$. Therefore, $\chi_c(G, \sigma)\leq \frac{4q-2}{q}=4-\frac{2}{q}$. On the other hand $ \chi_c(G, \sigma)\leq \frac{2n}{q}$. 

For a fixed $n$, the sequence $(\frac{2n}{q})_{q\in \mathbb{N}}$ is decreasing, whereas the sequence $(4-\frac{2}{q})_{q\in \mathbb{N}}$ is increasing.
It is easy to check that
\[
\max_{q\in\mathbb{N}} \min\left\{ \frac{2n}{q}, 4 - \frac{2}{q} \right\} = \begin{cases}
4-\frac{4}{n+1} & \textrm{for $q=\frac{n+1}{2}$ if $n$ is odd,} \\
4-\frac{4}{n} & \textrm{ for $q=\frac{n}{2}$ if $n$ is even.} \hfil \qedhere \\
\end{cases} 
\]
\end{proof}

Next we show that the bound in Theorem~\ref{thm:2-degenerate} is tight for each value of $n\geq 1$. We construct a sequence of signed $2$-degenerate simple graphs $\Omega_i$ reaching the bound for $n=2i+1$. For even values of $n$, it would be enough to add an isolated vertex to $\Omega_i$.

Let $\Omega_1=(K_3, +)$, that is the complete graph on three vertices with all edges being positive. Let $v_1, v_2, v_3$ be its vertices. Starting with $\Omega_1$, we define the sequence $\Omega_i$ of signed graphs as follows. Given $\Omega_i$ on vertices $v_1, v_2, \ldots, v_{2i+1}$, we first add a vertex $v_{2i+2}$ which a copy of $v_{2i+1}$, i.e., it sees each of the two neighbors of $v_{2i+1}$ with edges of the same sign. Then we add a new vertex $v_{2i+3}$ which is joined to $v_{2i+1}$ and $v_{2i+2}$, to one with a negative edge and to the other with a positive edge. Observe that $\Omega_i$ has $2i+1$ vertices and is $2$-degenerate. The elements $\Omega_{2}, \Omega_{3}$ and $\Omega_{4}$ of the sequence are illustrated in Figure~\ref{fig:Omega2}, \ref{fig:Omega3}, \ref{fig:Omega4} respectively.

\begin{figure}[htbp]
	\centering
	\begin{minipage}[t]{.25\textwidth}
		\centering
		\begin{tikzpicture}
			[scale=.45]
		
\draw[rotate=0] (2,0)  node[circle, draw=black!80, inner sep=0mm, minimum size=2mm] (v1){\scriptsize ${v_{_{1}}}$};

\draw[rotate=0] (-2,0)  node[circle, draw=black!80, inner sep=0mm, minimum size=2mm] (v2){\scriptsize ${v_{_{2}}}$};

\draw[rotate=0] (0,2)  node[circle, draw=black!80, inner sep=0mm, minimum size=2mm] (v3){\scriptsize ${v_{_{3}}}$};

\draw[rotate=0] (0,-2)  node[circle, draw=black!80, inner sep=0mm, minimum size=2mm] (v4){\scriptsize ${v_{_{4}}}$};

\draw[rotate=0] (4,0)  node[circle, draw=black!80, inner sep=0mm, minimum size=2mm] (v5){\scriptsize ${v_{_{5}}}$};

\draw[rotate=0] (0,-4)  node[circle, draw=white, inner sep=0mm, minimum size=2mm] (v8){};

\draw  [dashed, line width=0.5mm, red] (v3) -- (v5);
\draw  [line width=0.5mm, blue] (v1) -- (v2);
\draw  [line width=0.5mm, blue] (v4)--(v1) -- (v3) -- (v2) -- (v4) -- (v5);
			
		\end{tikzpicture}
		\caption{$\Omega_2$}
		\label{fig:Omega2}     
	\end{minipage}
	\begin{minipage}[t]{.28\textwidth}
		\centering
  \begin{tikzpicture}
	[scale=.45]
\draw[rotate=0] (2,0)  node[circle, draw=black!80, inner sep=0mm, minimum size=2mm] (v1){\scriptsize ${v_{_{1}}}$};

\draw[rotate=0] (-2,0)  node[circle, draw=black!80, inner sep=0mm, minimum size=2mm] (v2){\scriptsize ${v_{_{2}}}$};

\draw[rotate=0] (0,2)  node[circle, draw=black!80, inner sep=0mm, minimum size=2mm] (v3){\scriptsize ${v_{_{3}}}$};

\draw[rotate=0] (0,-2)  node[circle, draw=black!80, inner sep=0mm, minimum size=2mm] (v4){\scriptsize ${v_{_{4}}}$};

\draw[rotate=0] (4,0)  node[circle, draw=black!80, inner sep=0mm, minimum size=2mm] (v5){\scriptsize ${v_{_{5}}}$};

\draw[rotate=0] (-4,0)  node[circle, draw=black!80, inner sep=0mm, minimum size=2mm] (v6){\scriptsize ${v_{_{6}}}$};

\draw[rotate=0] (0,4)  node[circle, draw=black!80, inner sep=0mm, minimum size=2mm] (v7){\scriptsize ${v_{_{7}}}$};

\draw[rotate=0] (0,-4)  node[circle, draw=white, inner sep=0mm, minimum size=2mm] (v8){};

\draw  [dashed, line width=0.5mm, red] (v6) -- (v3) -- (v5) -- (v7);
\draw  [line width=0.5mm, blue] (v1) -- (v2);
\draw  [line width=0.5mm, blue] (v7) -- (v6) -- (v4) -- (v1) -- (v3) -- (v2) -- (v4) -- (v5);

		\end{tikzpicture}
		\caption{$\Omega_3$}
		\label{fig:Omega3}
	\end{minipage}
	\begin{minipage}[t]{.32\textwidth}
		\centering
		\begin{tikzpicture}
			[scale=.45]
\draw[rotate=0] (2,0)  node[circle, draw=black!80, inner sep=0mm, minimum size=2mm] (v1){\scriptsize ${v_{_{1}}}$};

\draw[rotate=0] (-2,0)  node[circle, draw=black!80, inner sep=0mm, minimum size=2mm] (v2){\scriptsize ${v_{_{2}}}$};

\draw[rotate=0] (0,2)  node[circle, draw=black!80, inner sep=0mm, minimum size=2mm] (v3){\scriptsize ${v_{_{3}}}$};

\draw[rotate=0] (0,-2)  node[circle, draw=black!80, inner sep=0mm, minimum size=2mm] (v4){\scriptsize ${v_{_{4}}}$};

\draw[rotate=0] (4,0)  node[circle, draw=black!80, inner sep=0mm, minimum size=2mm] (v5){\scriptsize ${v_{_{5}}}$};

\draw[rotate=0] (-4,0)  node[circle, draw=black!80, inner sep=0mm, minimum size=2mm] (v6){\scriptsize ${v_{_{6}}}$};

\draw[rotate=0] (0,4)  node[circle, draw=black!80, inner sep=0mm, minimum size=2mm] (v7){\scriptsize ${v_{_{7}}}$};

\draw[rotate=0] (0,-4)  node[circle, draw=black!80, inner sep=0mm, minimum size=2mm] (v8){\scriptsize ${v_{_{8}}}$};

\draw[rotate=0] (6,0)  node[circle, draw=black!80, inner sep=0mm, minimum size=2mm] (v9){\scriptsize ${v_{_{9}}}$};
		
\draw  [dashed, line width=0.5mm, red] (v6) -- (v3) -- (v5) -- (v7) -- (v9);
\draw  [dashed, line width=0.5mm, red] (v8) -- (v5);
\draw  [line width=0.5mm, blue] (v1) -- (v2);
\draw  [line width=0.5mm, blue] (v7) -- (v6) -- (v4) -- (v1) -- (v3) -- (v2) -- (v4) -- (v5);
\draw  [line width=0.5mm, blue] (v6) -- (v8) -- (v9);
		\end{tikzpicture}
		\caption{$\Omega_4$}
		\label{fig:Omega4}
	\end{minipage}
\end{figure}

\begin{proposition}
Given a signed graph $\Omega_{i}$ as defined above, we have $$\chi_c(\Omega_i)=4-\dfrac{4}{|V(\Omega_i)|+1}.$$
\end{proposition}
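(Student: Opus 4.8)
The plan is to prove ``$\le$'' by a citation and ``$\ge$'' by a direct geometric argument on the circle. For the upper bound, $\Omega_i$ is $2$-degenerate with $|V(\Omega_i)| = 2i+1$ an odd number, so Theorem~\ref{thm:2-degenerate} gives $\chi_c(\Omega_i) \le 4 - \frac{4}{(2i+1)+1} = 4 - \frac{4}{|V(\Omega_i)|+1}$ at once. The whole content is the matching lower bound: I would show that every circular $r$-colouring of $\Omega_i$ has $r \ge 4 - \frac{4}{|V(\Omega_i)|+1} = 4 - \frac{2}{i+1}$. This is trivial if $r \ge 4$, so assume $r < 4$ and write $r = 4-2\delta$ with $\delta > 0$; since $\Omega_i$ contains a positive triangle we have $r \ge 3$, hence $0 < \delta \le \frac12$, and the goal is $(i+1)\delta \le 1$. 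First I would record the colouring rules in a uniform ``arc'' form: since $r < 4$, a vertex joined by a positive edge to a vertex of colour $x$ must lie in the closed arc $P(x)$ of radius $\tfrac r2 - 1 = 1-\delta$ centred at the antipode $\bar x$ of $x$ (the complement of the forbidden open radius-$1$ arc about $x$), and a vertex joined by a negative edge to a vertex of colour $x$ must lie in the closed arc $N(x)$ of radius $1-\delta$ centred at $x$.

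Next I would use the layered structure of $\Omega_i$. Set $L_0 = \{v_1,v_2\}$, $L_k = \{v_{2k+1},v_{2k+2}\}$ for $1\le k\le i-1$, and $L_i = \{v_{2i+1}\}$. By construction every vertex of $L_k$ ($1\le k\le i$) is adjacent to both vertices of $L_{k-1}$: by positive edges to both when $k=1$ (the triangle edges $v_1v_3, v_2v_3$ together with $v_1v_4, v_2v_4$ from the copy $v_4$ of $v_3$), and by one positive edge and one negative edge when $k\ge 2$. Hence, if $x,y$ denote the colours of the two vertices of $L_{k-1}$, every vertex of $L_k$ must lie in a set $I_k$ equal to an intersection of two closed arcs of radius $1-\delta$: for $k=1$ we have $I_1 = P(x)\cap P(y)$, whose centres are at circular distance $m_0 := d(x,y)$; for $k\ge 2$ we have $I_k = N(x)\cap P(y)$ (up to swapping $x,y$), and since $d(x,\bar y) = \tfrac r2 - d(x,y)$ its two centres are at circular distance $m_{k-1} := \tfrac r2 - d(x,y)$. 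Two closed arcs of length $2-2\delta$ on a circle of circumference $4-2\delta$ overlap in at most one arc, so $I_k$ is a single arc; it is nonempty iff $m_{k-1}\le 2(1-\delta)$, and then $|I_k| = 2(1-\delta) - m_{k-1}$, which is $< 1 < \tfrac r2$.

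Then I would normalise and telescope. Rotating so $f(v_1) = 0$ and reflecting so $0 < f(v_2) < f(v_3) < r$, the positive triangle $v_1v_2v_3$ forces $f(v_2) = 1+\alpha$, $f(v_3) = 2+\alpha+\beta$, $r = 3+\alpha+\beta+\gamma$ with $\alpha,\beta,\gamma \ge 0$, so $\alpha+\beta+\gamma = 1-2\delta$ and $m_0 = 1+\alpha$, whence $|I_1| = 2(1-\delta)-(1+\alpha) = \beta+\gamma$. Put $\rho_k := d(f(v_{2k+1}),f(v_{2k+2}))$ for $1\le k\le i-1$; since both colours of $L_k$ lie in the arc $I_k$ of length $<\tfrac r2$, we get $\rho_k \le |I_k|$, so $\rho_1 \le \beta+\gamma$. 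For $2\le k\le i-1$, nonemptiness of $I_k$ gives $m_{k-1} = \tfrac r2 - \rho_{k-1}\le 2(1-\delta)$, i.e.\ $\rho_{k-1}\ge\delta$, and then $|I_k| = 2(1-\delta) - (\tfrac r2 - \rho_{k-1}) = \rho_{k-1}-\delta$, so $\rho_k \le \rho_{k-1}-\delta$. Finally $v_{2i+1}$ must be coloured, i.e.\ $I_i\neq\emptyset$, which for $i\ge 2$ forces $\rho_{i-1}\ge\delta$. Telescoping,
\[
\delta \le \rho_{i-1} \le \rho_{i-2}-\delta \le \cdots \le \rho_1 - (i-2)\delta \le (\beta+\gamma) - (i-2)\delta,
\]
so $\beta+\gamma \ge (i-1)\delta$; together with $\beta+\gamma \le \alpha+\beta+\gamma = 1-2\delta$ this gives $(i+1)\delta \le 1$, that is $r = 4-2\delta \ge 4 - \tfrac{2}{i+1}$. (When $i=1$ the positive triangle alone already forces $r\ge 3 = 4-\tfrac{2}{i+1}$.) With the upper bound this yields $\chi_c(\Omega_i) = 4 - \tfrac{2}{i+1} = 4 - \tfrac{4}{|V(\Omega_i)|+1}$.

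The step I expect to require the most care is the geometry of the second paragraph: showing $I_k$ is a single arc with exactly $|I_k| = \rho_{k-1}-\delta$ for $k\ge 2$ (and $|I_1| = \beta+\gamma$). This rests on the exact numerology $r = 4-2\delta$, on a radius-$(1-\delta)$ arc having length $2(1-\delta)$, and on the complement of an open radius-$1$ arc being a closed radius-$(1-\delta)$ arc about the antipode. One must also verify that every arc that occurs has length $< \tfrac r2$, so that circular distances inside it are additive and the inequalities $\rho_k \le |I_k|$ hold, and must track the level-$1$ step (two positive edges, centre-distance $m_0 = d(x,y)$) separately from the level-$k$ steps for $k\ge 2$ (one positive and one negative edge, centre-distance $m_{k-1} = \tfrac r2 - d(x,y)$), since these differ even though both reduce to intersecting two radius-$(1-\delta)$ arcs. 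The remaining arithmetic is routine.
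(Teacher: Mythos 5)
Your proof is correct, but it takes a genuinely different route from the paper's. The paper argues by induction on $i$ with a strengthened hypothesis: it shows $\chi_c(\Omega_i)=r_i$ \emph{and} that in any circular $r_i$-coloring the tight cycle is Hamiltonian. The lower bound then comes from the rationality machinery of Proposition~\ref{prop:st} (there is no admissible rational between $r_{i-1}$ and $r_i$ with numerator at most $2(2i+1)$), combined with the observation that a Hamiltonian tight cycle forces $\psi(v_{2i-1})=\psi(v_{2i})$, after which $v_{2i+1}$ — joined to these two identically coloured vertices by edges of opposite signs — cannot be coloured on a circle of circumference less than $4$, plus a gcd computation to recover the Hamiltonicity of the tight cycle at level $i$. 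You instead give a direct, self-contained geometric argument: writing $r=4-2\delta$, you show each layer $L_k$ is confined to a single arc $I_k$ whose length shrinks by exactly $\delta$ per layer ($|I_k|=\rho_{k-1}-\delta$), and telescoping against the triangle's slack $\alpha+\beta+\gamma=1-2\delta$ yields $(i+1)\delta\le 1$ for every $r$-colouring with $r<4$. Your arc computations (radius $1-\delta$, centre distances $m_0=d(x,y)$ versus $m_{k-1}=\frac r2-d(x,y)$, single-component intersections since the arcs are shorter than half the circle) all check out, as does the upper bound via Theorem~\ref{thm:2-degenerate}. What each approach buys: the paper's induction produces the structural by-product that tight cycles are Hamiltonian, which it reuses later for the planar examples $\Gamma^*_i$; yours avoids the tight-cycle and rationality apparatus entirely, works uniformly for all real $r<4$ rather than only at the critical rationals, and makes the quantitative mechanism of the lower bound (loss of $\delta$ of slack per layer) explicit.
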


\begin{proof}
We prove by induction a slightly stronger claim. Let $r_{_{i}}=4-\frac{2}{i+1}$. We claim that $\chi_c(\Omega_i)=r_{_{i}}$ and, moreover, in any circular $r_{_{i}}$-coloring of $\Omega_{i}$ the tight cycle is a Hamiltonian cycle.

The case $i=1$ of this claim is immediate. That $\chi_c(\Omega_i)\leq r_{_{i}}$ follows from Theorem~\ref{thm:2-degenerate}. To show that $\chi_c(\Omega_i)\geq r_{_{i}}$, it is enough to show that $\Omega_i$ is not $r_{_{i-1}}$-colorable, because there are no rational numbers between $r_{_{i-1}}$ and $r_{i}$ with a numerator at most $2(2i+1)$. To this end, and toward a contradiction, assume $\psi$ is a circular $r_{_{i-1}}$-coloring of $\Omega_i$. We claim that $\psi(v_{2i-1})=\psi(v_{2i})$. That is because $\psi$ is also a circular $r_{_{i-1}}$-coloring of $\Omega_{i-1}$, and in any such a coloring the tight cycle (of $\Omega_{i-1}$) is a Hamilton cycle. As $v_{2i-1}$ is of degree 2 in $\Omega_{i-1}$ and $v_{2i}$ is a copy of $v_{2i-1}$, we must have $\psi(v_{2i-1})=\psi(v_{2i})$. But then to complete the circular $r_{_{i-1}}$-coloring to $v_{2i+1}$ we must have a point on the circle which is at distance at least 1 from both $\psi(v_{2i-1})$ and its antipodal. But that is only possible if the circumference of circle used for coloring is at least 4. Thus $\chi_c(\Omega_i)=\frac{4i+2}{i+1}$. We then observe that $gcd(4i+2, i+1)=1$ when $i$ is even and $gcd(4i+2, i+1)=2$ when $i$ is odd. Hence any tight cycle of $\Omega_{i}$ in a circular $\frac{4i+2}{i+1}$-coloring is a Hamilton cycle, completing the proof.
\end{proof}

\section{$\mathcal{C}_{<4}$-closed operations}\label{sec:C<4}

Recall that $\mathcal{C}_{< 4}$ is the class of signed graphs of circular chromatic number strictly smaller than 4. In this section, we present two graph operations that preserve membership in this class.

We first observe that Theorem~\ref{thm:2-vertex} could also be viewed as an operation that preserves membership in this class: For each $(G, \sigma)\in \mathcal{C}_{<4}$ and any pair of distinct vertices $x$ and $y$ of $(G, \sigma)$, if we add a vertex $u$ and join it to $x$ and $y$ with edges of arbitrary signs, then the resulting signed graph is also in $\mathcal{C}_{<4}$.

A slight modification and generalization of this one is based on the following notation. Let $(G, \sigma)$ be a signed graph and let $u$ be a vertex of $(G, \sigma)$. We define $F_u(G, \sigma)$ to be the signed graph obtained from $(G, \sigma)$ by contracting all the edges incident to $u$ and keeping signs of all other edges as it is. One could easily observe that for (switching) equivalent signatures $\sigma$ and $\sigma'$, the signed graphs $F_u(G, \sigma)$ and $F_u(G, \sigma')$ might not be switching equivalent.

\begin{theorem}\label{thm:F_u}
Given a signed graph $(G, \sigma)$ and a vertex $u$ of $(G, \sigma)$, if $\chi_c(F_u(G, \sigma))< 4$, then $\chi_c(G, \sigma)< 4$.
\end{theorem}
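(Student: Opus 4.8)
The plan is to follow the template of the proof of Theorem~\ref{thm:2-vertex}. Take a counterexample $(G,\sigma)$ with the fewest vertices; as there, routine reductions make $G$ connected with minimum degree at least $2$ (a leaf of $G$ — including a pendant neighbour of $u$, or $u$ itself when $\deg u=1$ — can be removed and recoloured, and its removal changes $F_u(G,\sigma)$ by at most a leaf). Let $u^{*}$ be the image of $N[u]$ in $F_u(G,\sigma)$. Every edge $x_ix_j$ with $x_i,x_j\in N(u)$ becomes a loop at $u^{*}$, and two neighbours of $u$ joined to a common vertex outside $N[u]$ by edges of opposite signs create a negative digon at $u^{*}$; since a positive loop admits no circular coloring and a negative digon has circular chromatic number $4$, the hypothesis $\chi_c(F_u(G,\sigma))<4$ forces all edges inside $N(u)$ to be negative and forces each vertex outside $N[u]$ to see all of its neighbours in $N(u)$ with one and the same sign. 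Since $G$ has minimum degree $2$, $F_u(G,\sigma)$ has an edge or a loop, so $\chi_c(F_u(G,\sigma))\ge 2$; fix a circular $(4-\epsilon)$-coloring $\varphi$ of $F_u(G,\sigma)$, so its circumference $r=4-\epsilon$ lies in $[2,4)$, and rotate the circle so that $\varphi(u^{*})=0$.

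Suppose first that all edges joining $u$ to $N(u)$ carry the same sign. If they are all negative, then $G[N[u]]$ is entirely negative, and assigning the value $0$ to every vertex of $N[u]$ extends $\varphi$: each negative edge inside $N[u]$ has its endpoints at distance $r/2\ge 1$ from one another's antipode, while each edge from $N(u)$ to a vertex $w\notin N[u]$ inherits verbatim the constraint that $\varphi$ already satisfied on the edge $u^{*}w$ (of the same sign). If they are all positive, assign $0$ to every vertex of $N(u)$ and $r/2$ to $u$: the positive edges at $u$ are realised by the antipodal pair $(r/2,0)$, the negative edges inside $N(u)$ are handled as before, $u$ has no further neighbours, and the outside edges again inherit their constraints. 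Either way $(G,\sigma)$ is circular $r$-colourable, contradicting minimality. So from now on $u$ has edges of both signs into $N(u)$.

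In this case we bring in slack, exactly as in Theorem~\ref{thm:2-vertex}: rescale $\varphi$ to a circular $(4-\frac{\epsilon}{2})$-coloring $\varphi'$ in which every edge of $F_u(G,\sigma)$ is satisfied with a fixed positive margin $\delta$, and, if needed, insert a short arc of the circle at a well-chosen point to widen a gap. For each neighbour $x_i$ of $u$ write $R_i$ for the set of positions compatible with all edges from $x_i$ to $V\setminus N[u]$; since $\varphi'(u^{*})=0$ met all of these with margin $\delta$, the set $R_i$ contains an arc around $0$, and its precise shape — typically a union of boundedly many arcs, one of them much larger than a neighbourhood of $0$ — is dictated by the positions of the few outside colours that $x_i$ must avoid. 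The extension to build is then: slide the neighbours joined to $u$ by a positive edge toward one extremity of their intervals $R_i$, arrange the neighbours joined by a negative edge so that their antipodes pile up near that same extremity, keep all of $N(u)$ within distance roughly $r/2$ of $0$ (so the negative edges inside $N(u)$ stay satisfied), and place $u$ in the gap that opens up on the opposite side; when a neighbour is slid over to a translate of its antipodal region its outside constraints are still met but now with the opposite sign, which is cleanly bookkept as a switching of $(G,\sigma)$ at the corresponding subset of $N(u)$ — keeping in mind that such a switch modifies $F_u(G,\sigma)$ (cf.\ the remark preceding the theorem), so the colours of the vertices outside $N[u]$ have to be carried over by hand (each replaced by its antipode on the switched part).

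The step I expect to be the real obstacle is guaranteeing that the gap for $u$ is non-empty. The naive placement, putting all of $N(u)$ at $0$, is useless: it forces $u$ to be at distance at least $1$ from both $0$ and its antipode, which is impossible on a circle of circumference below $4$. So the argument must exploit the room inside the intervals $R_i$, and the dangerous configuration is a neighbour of $u$ whose outside neighbours are so rigidly coloured by $\varphi'$ that its $R_i$ shrinks essentially to the single point $0$ while $u$ still has edges of both signs at it. The crux should be to show that this cannot coexist with $\chi_c(F_u(G,\sigma))<4$: via the tight-cycle description (Proposition~\ref{prop:st}), such a rigid neighbour, together with a tight cycle of $F_u(G,\sigma)$ through $u^{*}$, ought to force $\chi_c(F_u(G,\sigma))$ up to $4$. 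A viable alternative is to prune the minimum counterexample harder — for instance showing that $N(u)$ is independent and that $u$ has bounded degree — which reduces the interval bookkeeping to a finite case check.
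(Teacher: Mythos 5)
Your overall template is the same as the paper's (the paper itself only sketches this proof): pull the circular $(4-\epsilon)$-coloring $\varphi$ of $F_u(G,\sigma)$ back to $G-u$ so that all of $N(u)$ receives the colour $0$ of the contracted vertex $u^*$, use the hypothesis to exclude positive loops and unbalanced digons, dispose of the case where all edges at $u$ carry one sign, and then perturb as in Theorem~\ref{thm:2-vertex}. The genuine gap is that you stop exactly at the step that carries all of the content --- the case where $u$ has edges of both signs into $N(u)$ --- and you misdiagnose what is needed to finish it. The ``dangerous configuration'' you worry about, a neighbour $x_i$ whose feasible region $R_i$ shrinks to the single point $0$, cannot occur: after the uniform rescaling to circumference $r''=4-\frac{\epsilon}{4}$ with factor $\gamma=\frac{4-\epsilon/4}{4-\epsilon}$, every constraint inherited from $u^*$ is satisfied with circle-distance at least $\gamma$ (antipodes commute with scaling), so \emph{every} $R_i$ contains the arc of radius $\gamma-1>\frac{3\epsilon}{16}$ around $0$, without exception. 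What is missing is the quantitative comparison that closes the argument: on $C''$, if the positive neighbours of $u$ sit at $0$ and the negative neighbours at some common $t$, then the point $1$ is a legal colour for $u$ as soon as $t>2-\frac{r''}{2}=\frac{\epsilon}{8}$, since then $d(1,0)=1$ and $d(1,\overline{t})=d(1,t+2-\frac{\epsilon}{8})\ge 1$. Choosing, say, $t=\frac{\epsilon}{6}\in\left(\frac{\epsilon}{8},\frac{3\epsilon}{16}\right)$, the displacement stays within the slack $\gamma-1$, so all constraints from $N(u)$ to the outside survive; the (necessarily negative) edges inside $N(u)$ survive because their endpoints remain within $\frac{\epsilon}{6}\le\frac{r''}{2}-1$ of each other. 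This is precisely the manoeuvre of Theorem~\ref{thm:F_uv}, where $u$ and its copy $u'$ are split by $\frac{\epsilon}{8}$; the required displacement is tiny, not of order $1$.

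Because of this, both of your proposed escape routes are off target. The tight-cycle detour via Proposition~\ref{prop:st} is not needed and would be awkward to run (nothing forces a tight cycle of $F_u(G,\sigma)$ through $u^*$), and the further structural pruning of the minimum counterexample is unnecessary. The idea of sliding a negative neighbour ``to a translate of its antipodal region'' and bookkeeping it as a switching on part of $N(u)$ is actively counterproductive: switching preserves $\chi_c$, so it cannot dissolve the tension between the unit ball around the positive neighbours and the unit ball around the antipode of the negative neighbours; only the sub-slack separation of the two groups described above does that. Your preliminary observations (loops, digons, the same-sign case, $\chi_c\ge 2$) are all correct and consistent with the paper's remarks, but as written the proposal does not prove the theorem in the only case where there is something to prove.
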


As $F_u(G, \sigma)$ and $F_u(G, \sigma')$ might not be switching equivalent even if $\sigma$ and $\sigma'$ are switching equivalent, in applying this theorem it is important to choose a suitable signature (switching equivalent to $\sigma$). In particular, if two neighbors of $u$, say $x$ and $y$, are adjacent with a positive edge, then $F_u(G, \sigma)$ will have a positive loop and
so
its circular chromatic number is $\infty$. Similarly, if two neighbors of $u$ have another common neighbor $v$ which sees one with a positive edge and the other with a negative edge, then $F_u(G, \sigma)$ has a digon and does not belong to $\mathcal{C}_{<4}$.

The proof of this theorem is quite similar to the proof of Theorem~\ref{thm:2-vertex}. We consider a circular $(4-\epsilon)$-coloring of $F_u(G, \sigma)$. Then we consider a corresponding coloring on $(G-u, \sigma)$ noting that all neighbors of $u$ are colored with a same color. We then modify the coloring as in the proof of Theorem~\ref{thm:2-vertex} to find a color for $u$. We leave the details to the reader.

\medskip
Next we define an edge-operation which also preserves membership in $\mathcal{C}_{<4}$.
Let $\hat{G}$ be a signed graph with a positive edge $uv$. We define $F_{uv}(\hat{G})$ to be the signed graph obtained from $\hat{G}$ as follows. First we add a copy $u'$ of $u$, that is to say for every neighbor $w$ of $u$ we join $u'$ to $w$ with an edge which is of the same sign as $uw$. Similarly, we add a copy $v'$ of $v$. Then we add two more vertices $x$ and $y$ with the following connections: $xu$, $yv$ as positive edges and $xu'$, $yv'$, and $xy$ as negative edges. See Figure~\ref{Fig:Operation}.

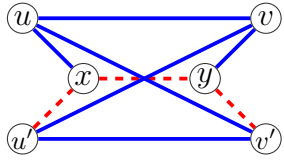
\begin{figure}[htbp]
	\centering
	\begin{tikzpicture}
		[scale=.4]

\draw[rotate=0] (-2,0)  node[circle, draw=black!80, inner sep=0mm, minimum size=4mm] (x){$x$};

\draw[rotate=0] (2,0)  node[circle, draw=black!80, inner sep=0mm, minimum size=4mm] (y){$y$};

\draw[rotate=0] (-4,2)  node[circle, draw=black!80, inner sep=0mm, minimum size=4mm] (u){$u$};

\draw[rotate=0] (-4,-2)  node[circle, draw=black!80, inner sep=0mm, minimum size=4mm] (u'){\footnotesize $u'$};

\draw[rotate=0] (4,2) node[circle, draw=black!80, inner sep=0mm, minimum size=4mm] (v){$v$};

\draw[rotate=0] (4,-2)  node[circle, draw=black!80, inner sep=0mm, minimum size=4mm] (v'){\footnotesize $v'$};

	   \draw  [dashed, line width=0.5mm, red] (u') -- (x) -- (y) -- (v');
	   \draw[line width=0.5mm, blue] (v)  -- (u);
	   \draw[line width=0.5mm, blue] (v')  -- (u');
	   \draw[line width=0.5mm, blue] (v')  -- (u) -- (x);
	   \draw[line width=0.5mm, blue] (u') -- (v) -- (y);
	\end{tikzpicture}
	\caption{The operation $F_{uv}$}
	\label{Fig:Operation}
\end{figure}

We prove that $\mathcal{C}_{<4}$ is closed under the operation $F_{uv}$. 

\begin{theorem}\label{thm:F_uv}
Given a signed graph $\hat{G}$ and a positive edge $uv$ of $\hat{G}$, if $\chi_{c}(\hat{G})< 4$, then  $\chi_{c}(F_{uv}(\hat{G}))< 4$.
\end{theorem}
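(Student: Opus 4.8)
The plan is to mimic the proof of Theorem~\ref{thm:2-vertex}, but to apply the scaling-and-inserting trick in a way that is compatible with the whole gadget $F_{uv}(\hat{G})\setminus\hat{G}=\{u',v',x,y\}$ at once. Fix a positive real $\epsilon<2$ so that $\hat{G}$ admits a circular $(4-\epsilon)$-coloring $\varphi$ on the circle $C$ of circumference $4-\epsilon$; we want to produce a circular $(4-\epsilon')$-coloring of $F_{uv}(\hat{G})$ for some $\epsilon'>0$ (say $\epsilon'=\frac{\epsilon}{8}$, leaving the exact loss to the computation). The first observation is that $u'$ is a copy of $u$ and $v'$ is a copy of $v$, so if we use the same coloring on the copies — that is, we want $\psi(u')=\psi(u)$ and $\psi(v')=\psi(v)$ — all edges from $u'$ (resp.\ $v'$) into the rest of $\hat{G}$ are automatically satisfied, exactly as in the $\Omega_i$ construction. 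Moreover the positive edge $uv$ forces $\varphi(u)$ and $\varphi(v)$ to be far apart on $C$, and after a uniform scaling to a circle $C'$ of circumference $4-\frac{\epsilon}{2}$ (via $\varphi'(x)=\frac{4-\epsilon/2}{4-\epsilon}\varphi(x)$) every positive edge has its endpoints at distance at least $1+\frac{\epsilon}{8-2\epsilon}$ and every negative edge has the slack analogously, just as in Theorem~\ref{thm:2-vertex}.

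Second, I would use the rotational and reflectional symmetries of the circle to normalize: put $\varphi'(u)=0$; since $uv$ is positive, $\varphi'(v)\in[1,\,3-\frac{\epsilon}{2}]$, and using the reflection fixing $0$ we may assume $\varphi'(v)=b$ with, say, $b$ on one prescribed side. Now I need to place $x$, $y$ on the final circle so that: $xu$ positive and $xu'$ negative (with $\psi(u')=\psi(u)=0$), $yv$ positive and $yv'$ negative (with $\psi(v')=\psi(v)$), and $xy$ negative. The condition on $x$ alone says $\psi(x)$ is at distance $\ge 1$ from $0$ and at distance $\ge 1$ from the antipode of $0$, i.e.\ $\psi(x)$ avoids two arcs of length $2$ around $0$ and around its antipode; on a circle of circumference slightly less than $4$ this leaves exactly two short feasible arcs near the two ``quarter points'' $\approx \pm 1$ (more precisely, near $\frac{r}{4}$ and $\frac{3r}{4}$ where $r$ is the circumference), each of length $r-4+$ (the gain from scaling). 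The same holds for $y$ relative to $\psi(v)=b$. So the real content is: after inserting a small interval (of total length $\frac{\epsilon}{4}$, split if necessary into two insertions at carefully chosen points of $C'$, exactly the move from Theorem~\ref{thm:2-vertex}) to get the final circle $C''$ of circumference $4-\epsilon'$, one can choose $\psi(x)$ in one of its two feasible arcs and $\psi(y)$ in one of its two feasible arcs so that in addition $\psi(x)$ and $\psi(y)$ are "negative-compatible" (distance $\le \frac{r''}{2}-1$ or $\ge \frac{r''}{2}+1$). Since there are two choices for $x$ roughly antipodal to each other and two for $y$ roughly antipodal to each other, at least one of the four combinations will satisfy the $xy$ constraint — this is the crux and is a small finite case check once the arcs are located; the insertions of the interval are used precisely to create the tiny bit of room that makes each of the four individual arcs nonempty and to line up one compatible pair.

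The step I expect to be the main obstacle is exactly this last packing: verifying that for \emph{every} admissible value of $b=\psi(v)$ (which can be anywhere in $[1,3-\frac{\epsilon}{2}]$, not just near $2$ as in the degree-$2$ case) there is a simultaneously valid choice of $\psi(x)\in\{x\text{-arc}_1,x\text{-arc}_2\}$ and $\psi(y)\in\{y\text{-arc}_1,y\text{-arc}_2\}$ with $xy$ negative, and to do the interval-insertion so that no previously-satisfied edge of $\hat G$ is broken (this part is verbatim from Theorem~\ref{thm:2-vertex}: inserting an interval never decreases a positive-edge distance, and shifts antipodes by only half the inserted length, which the scaling slack $\frac{\epsilon}{8-2\epsilon}$ absorbs). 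Concretely I would parametrize the two $x$-arcs as centered near $1$ and near $3$ (i.e.\ near $\varphi'(u)\pm\frac{r}{4}$) and the two $y$-arcs as centered near $b\pm\frac{r}{4}$, compute that two arcs — one from each pair — are within the negative-admissible region of each other no matter what $b$ is (using that $b$ itself is constrained by $uv$ being positive and by the symmetry normalization), pick that pair, and choose the single insertion point of the length-$\frac{\epsilon}{4}$ interval (or two insertion points, as in the original proof, to also respect $\psi(u)=0$ and $\psi(v)\ge 2$-type normalizations) so that all four of $u,u',v,v',x,y$ land where required. I would then conclude $\chi_c(F_{uv}(\hat G))\le 4-\epsilon'<4$, and leave the routine interval-arithmetic to the reader as the paper does for Theorem~\ref{thm:F_u}.
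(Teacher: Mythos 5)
There is a genuine gap, and it sits at the very first step of your plan: you insist on $\psi(u')=\psi(u)$ and $\psi(v')=\psi(v)$. With that choice the vertex $x$ becomes uncolorable on \emph{any} circle of circumference $r<4$. Indeed, $x$ sees $u$ with a positive edge and $u'$ with a negative edge, so $\psi(x)$ must be at distance at least $1$ from $\psi(u)$ and at distance at least $1$ from the antipode of $\psi(u')$. If $\psi(u)=\psi(u')=p$, then $\psi(x)$ must avoid the open arc of length $2$ centred at $p$ and the open arc of length $2$ centred at $\bar{p}=p+\tfrac{r}{2}$; these two arcs overlap on both sides as soon as $r<4$ (the gaps between them have length $\tfrac{r}{2}-2<0$), so together they cover the whole circle and the feasible set for $x$ is empty. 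Your parenthetical ``each of length $r-4+$ (the gain from scaling)'' is exactly where this goes wrong: the correct length of each feasible arc is $\tfrac{r-4}{2}<0$, and the scaling slack cannot be added to it, because that slack lives on the edges of $\hat{G}$, not on the brand-new edges $xu$ and $xu'$, which must meet the full threshold of $1$. No choice of insertion points can repair this while $\psi(u')=\psi(u)$ holds exactly; this obstruction is precisely the mechanism the paper uses in the $\Omega_i$ lower-bound construction (a vertex joined positively to one of two co-located twins and negatively to the other forces circumference at least $4$), so a proof that keeps the twins co-located is doomed.

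The paper's proof differs from yours in exactly this one essential point: after scaling (which you set up correctly) and inserting an interval of length $\tfrac{\epsilon}{4}$, it places the copies \emph{near but not on} their originals, namely $\psi(u')=\tfrac{\epsilon}{8}$ while $\psi(u)=0$, and $\psi(v')=\varphi'(v)+\tfrac{\epsilon}{8}$ while $\psi(v)=\varphi'(v)+\tfrac{\epsilon}{4}$. The displacement of $\tfrac{\epsilon}{8}$ is small enough that the scaling slack $\tfrac{\epsilon}{8-2\epsilon}>\tfrac{\epsilon}{8}$ keeps every edge from $u'$ (resp.\ $v'$) into $\hat{G}$ valid, yet large enough to open a genuine feasible point for $x$ (namely $\psi(x)=1$, at distance exactly $1$ from $\psi(u)=0$ and exactly $1$ from the antipode of $\psi(u')$) and symmetrically for $y$; the $xy$ constraint is then checked directly rather than by a four-case parity argument. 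If you replace ``$\psi(u')=\psi(u)$'' by ``$\psi(u')$ is an $\tfrac{\epsilon}{8}$-perturbation of $\psi(u)$ on the correct side'' and redo your arc computation, your outline converges to the paper's argument.
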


\begin{proof}
Let $\varphi$ be a circular $(4-\epsilon)$-coloring of $\hat{G}$ for a positive real number $\epsilon$. We assume, without loss of generality, that $\varphi(u)=0$ and $ 1\leq \varphi(v) \leq 2-\frac{\epsilon}{2}$. 
We view $\varphi$ as a partial coloring of $F_{uv}(\hat{G})$. 
Our goal would be to modify $\varphi$ to a circular $(4-\frac{\epsilon}{4})$-coloring of $\hat{G}$ in such a way that we can extend it on the four new vertices $u'$, $v'$, $x$ and $y$. This would be done in two steps. We first scale the circle to increase its circumference by $\frac{\epsilon}{2}$ and then insert an interval of length $\frac{\epsilon}{4}$ into the circle after which we must modify images of some vertices.  The details are as follows.

First we define the $(4-\frac{\epsilon}{2})$-coloring $\varphi'$ as follows:
	$$\varphi'(w) = \frac{4-\frac{\epsilon}{2}}{4 - \epsilon}\varphi(w).$$

This was the scaling part. Let $\gamma=\frac{4-\frac{\epsilon}{2}}{4-\epsilon}$ and note that $$\gamma=1+\frac{\epsilon}{8-2\epsilon}>1+\frac{\epsilon}{8}>1.$$ Next we define $\psi$ and show that it is a circular $(4-\frac{\epsilon}{4})$-coloring of $F_{uv}(\hat{G})$. On the vertices of $\hat{G}$, the mapping $\psi$ is defined as follows.
$$\psi(w) = \begin{cases} 
		\varphi'(w), &\text{ if $\varphi'(w)< 1-\dfrac{\epsilon}{8}$}, \cr
		\varphi'(w) + \dfrac{\epsilon}{4}, & \text{ otherwise.} \cr
\end{cases}$$

It is then extended to the remaining four vertices by: $$\psi(u')=\frac{\epsilon}{8}, \, \psi(v')= \varphi'(v) +\frac{\epsilon}{8}, \, \psi(x)=1, \text{ and } \psi(y)=\varphi'(v)+\frac{\epsilon}{4}-1.$$

We need to show that $\psi$ is a circular $(4-\frac{\epsilon}{4})$-coloring of $F_{uv}(\hat{G})$.
Restriction of $\psi$ on $\hat{G}$ is a circular coloring because if $w_1$ and $w_2$ are two adjacent vertices of $\hat{G}$, then in coloring $\varphi$ either (1) $\varphi(w_1)$ and $\varphi(w_2)$ are at distance at least 1 or (2) $\varphi(w_1)$ and $\overline{\varphi(w_2)}$ are at distance at least 1. This distance then is increased to at least $\gamma$ in $\varphi'$. Then in defining $\psi$ based on $\varphi'$ either the distance remains the same, or it increases by $\frac{\epsilon}{4}$, or one end moves by a value of $\frac{\epsilon}{8}$. Thus, in all the cases the resulting distance is still larger than 1. 

It remains to consider the connection to and among new vertices, $u'$, $v'$, $x$, and $y$. By the definition of $\psi$, the five edges incident to $x$ or $y$ are satisfying the conditions of circular coloring. It remains to verify the condition for edges incident with $u'$ and $v'$ but not incident with $x$ or $y$. 

We first consider the edges incident with $u'$. Recall that $u'$ is a copy of $u$. Let $w$ be a neighbor $u$ in $\hat{G}$. Based on the sign of $wu'$ we consider two cases. 

\begin{itemize}
\item $wu'$ is a positive edge. 

We need to show that the distance between $\psi(w)$ and $\psi(u')$ is at least 1. Using the definition of circular coloring based on the circle, we consider both clockwise and anticlockwise distances on the circle. The anticlockwise path of the circle from $u'$ to $w$ passes through $u$ and since $u$ and $w$ are already proved to be at distance at least 1, the anticlockwise distance from $\psi(u')$ to $\psi(w)$ is larger than 1. For the clockwise direction, since $uw$ is a positive edge we have $\varphi (w)\geq 1$. Thus, by the definition of $\psi$, we $\psi(w)=\varphi'(w)+\frac{\epsilon}{4}$ whereas $\psi(u')=\frac{\epsilon}{8}$. Therefore, the clockwise distance of $\psi(u')$ and $\psi(w)$ is larger than the clockwise distance of $\varphi(u)=0$ and $\varphi'(w)$ which is at least 1.

\item $wu'$ is a negative edge. 

In circular $(4-\epsilon)$-coloring $\varphi$ the distance of $\varphi(u)$ and $\varphi(w)$ is at most $1-\frac{\epsilon}{2}$. Again we consider two possibilities depending on if the distance is obtained in clockwise direction starting from $0$ or anticlockwise. For clockwise direction, we observe that $\varphi'(w)< 1-\frac{\epsilon}{8}$. Thus in defining $\psi$ the distance of $\psi(u)$ and $\psi(w)$ remains the same as the distance of $\varphi'(u)$ and $\varphi'(w)$, and the distance of $\psi(u')$ to $\psi(w)$ is actually shorter. If the distance of $\varphi(w)$ and $\varphi(u)$ is obtained on anticlockwise direction starting from $0$, then this distance is at most $1- \frac{\epsilon}{2}$. Therefore, the distance of $\varphi'(w)$ and $\varphi(u)$ is at most $(1- \frac{\epsilon}{2}) \gamma \varphi(w)$ which is strictly smaller than $1-\frac{\epsilon}{4}$. As $\phi(u')=\frac{\epsilon}{8}$, the distance between $\psi(w)$ and $\psi(u')$ remains strictly smaller than $1-\frac{\epsilon}{8}$, thus the negative edge $wu'$ satisfies the condition.
\end{itemize}

We now consider the edges incident with $v'$. Observe that since $1 \leq \varphi(v)\leq 2-\frac{\epsilon}{2}$, we have $\gamma \leq \varphi'(v)\leq \gamma(2-\frac{\epsilon}{2})$. By the definition of $\psi$, and because $\varphi'(v)\geq 1-\frac{\epsilon}{8}$, we have:

 $$\gamma+\frac{\epsilon}{8} \leq \psi(v')= \varphi'(v) +\frac{\epsilon}{8}\leq \gamma(2-\frac{\epsilon}{2})+\frac{\epsilon}{8}=2-\frac{\epsilon}{8}.$$

As $v'$ is a copy of $v$, based on the sign of $wv$ we consider two cases.

\begin{itemize}
\item $wv'$ is a positive edge.

We need to show that the distance between $\psi(w)$ and $\psi(v')$ is at least $1$. Since $wv$ is a positive edge, it implies two possibilities: (1) $\varphi(w)\in [0, 1-\frac{\epsilon}{2}]$, (2) $\varphi(w)\in [2-\frac{\epsilon}{2},4-\epsilon]$. For case (1), $\varphi'(w)=\gamma \varphi(w)< 1-\frac{\epsilon}{8}$, then $\psi(w)=\varphi'(w)$ and thus the distance between $\psi(w)$ and $\psi(v')$ is larger than $\gamma+\frac{\epsilon}{8}$. For case (2), $\psi(w)=\varphi'(w)+\frac{\epsilon}{4}$. Thus the distance between $\psi(w)$ and $\psi(v')$ is at least $1+\frac{\epsilon}{8-2\epsilon}+\frac{\epsilon}{8}$, hence strictly larger than 1.

\item $wv'$ is a negative edge.

As $wv$ is a negative edge in $\hat{G}$, in any circular $(4-\epsilon)$-coloring $\varphi$, the distance of $\varphi(v)$ and $\varphi(w)$ is at most $1-\frac{\epsilon}{2}$ and then the distance of $\varphi'(v)$ and $\varphi'(w)$ is at most $\gamma(1-\frac{\epsilon}{2})<1-\frac{\epsilon}{4}$. Also, we have that $\frac{\epsilon}{2}\leq \varphi(w)\leq 3-\epsilon$. By the definition of $\psi$, if $\varphi'(w)\geq 1-\frac{\epsilon}{8}$, then $\psi(w)=\varphi'(w)+\frac{\epsilon}{4}$ and thus the distance between $\psi(w)$ and $\psi(v')$ is at most $\gamma(1-\frac{\epsilon}{2})+\frac{\epsilon}{8}<1-\frac{\epsilon}{8}$. It remains to show that if $\varphi'(w)< 1-\frac{\epsilon}{8}$, then the distance between $\psi(w)$ and $\psi(v')$ is smaller than $1-\frac{\epsilon}{8}$. In this case, $\psi(w)=\varphi'(w)$. Therefore, compared with the distance between $\varphi'(w)$ and $\varphi'(v)$, the distance between $\psi(w)$ and $\psi(v')$ is increased by $\frac{\epsilon}{8}$, therefore, it is at most $\gamma(1-\frac{\epsilon}{2})+\frac{\epsilon}{8}<1-\frac{\epsilon}{8}$.\qedhere
\end{itemize}

\end{proof}

\section{Signed bipartite planar simple graphs}\label{sec:BipartitePlanar}

In this section, we would like to prove Theorem~\ref{thm:SignedBipartitePlanar}. As in Section~\ref{sec:2degenerate}, we will first show that the circular chromatic number of any signed bipartite planar simple graph is strictly smaller than 4. Then we use the notion of tight cycle to get an improved upper bound. Finally, we show that this upper bound is tight.

To this end, we will work with a minimum counterexample. One of properties of a minimum counterexample follows from the following folding lemma of \cite{NRS13}. We recall that a plane graph or a signed plane graph is a (signed) planar graph together an embedding on the plane. For a plane graph, a \emph{separating} $l$-cycle is an $l$-cycle which is not a face.

\begin{lemma}\label{lem:Folding Lemma}{\em [Bipartite folding lemma]}
Let $\hat{G}$ be a signed bipartite plane graph and let $2k$ be the length of its shortest negative cycle. Let $F$ be a face whose boundary is not a negative cycle of length $2k$. Then there are vertices $v_{i-1}, v_i,v_{i+1}$, consecutive in the cyclic order of the boundary of $F$, such that identifying $v_{i-1}$ and $v_{i+1}$, after a possible switching at one of the two vertices, the result remains a signed bipartite plane graph whose shortest negative cycle is still of length $2k$.
\end{lemma}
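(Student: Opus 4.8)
The plan is to adapt the classical folding lemma of Klostermeyer and Zhang (for planar graphs and odd girth) to the signed bipartite setting, where the role of ``odd girth'' is played by the length $2k$ of a shortest negative cycle and the role of ``odd closed walk'' is played by a negative closed walk. First I would fix a reference signature and recall that in a signed bipartite plane graph, every cycle has even length, so the only parameter that can obstruct the identification is the sign of cycles, controlled via Theorem~\ref{Thm:Zaslavski}. The key structural fact I would isolate is: if $v_{i-1}, v_i, v_{i+1}$ are consecutive on the boundary of the face $F$ and identifying $v_{i-1}$ with $v_{i+1}$ (after a suitable switching at one of them, chosen so that the two paths $v_{i-1}v_iv_{i+1}$ around the would-be identified vertex are ``consistent'') creates a negative cycle of length less than $2k$, then this new short negative cycle must pass through the identified vertex, hence corresponds in $\hat G$ to a negative closed walk using the path $v_{i-1}v_iv_{i+1}$ together with a $v_{i-1}$--$v_{i+1}$ path $P$ on one side of $F$ of length at most $2k-3$. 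So a \emph{bad} choice of $i$ yields a short negative $v_{i-1}$--$v_{i+1}$ walk avoiding the edges $v_{i-1}v_i, v_i v_{i+1}$.

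Next I would run the standard counting/parity argument on the boundary walk of $F$. Suppose, for contradiction, that \emph{every} triple of consecutive boundary vertices is bad. Write the boundary of $F$ cyclically as $v_0 v_1 \cdots v_{m-1} v_0$ (with $m$ even, $m \ge 2k+2$ or $m = 2k$ with the wrong sign). For each $i$, badness gives a negative (or, in the complementary sub-case, a ``short even'') $v_{i-1}$--$v_{i+1}$ connection through the interior on the far side of $F$; stringing these together around $F$ produces a closed walk whose sign and length I can compute. The point is that summing the local obstructions around the whole face forces either a negative cycle strictly shorter than $2k$ somewhere in $\hat G$ — contradicting the definition of $2k$ — or forces the boundary of $F$ itself to be a negative $2k$-cycle, contradicting the hypothesis on $F$. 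Concretely, one shows that the ``defects'' $d_i$ (length of the shortcut minus $2$) satisfy $\sum_i$ of an appropriate $\pm$-weighted version being incompatible with planarity of $\hat G$ unless one of the cycles we built is already too short or is exactly the forbidden configuration. The Jordan curve theorem enters to guarantee the shortcut paths on opposite sides of $F$ are vertex-disjoint from each other except at shared boundary vertices, so that gluing them genuinely yields cycles (or can be decomposed into cycles) of the claimed length.

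I would organize the write-up as: (1) set up switching so that ``identify $v_{i-1}$ and $v_{i+1}$, possibly switching at one of them'' is unambiguous and records exactly the sign of the 2-path $v_{i-1}v_iv_{i+1}$; (2) prove the local claim that a bad $i$ produces an interior $v_{i-1}$--$v_{i+1}$ path $P_i$ with $\ell(P_i) \le 2k-3$ whose sign, combined with that of the 2-path, is negative (using bipartiteness to control parities so that ``length $\le 2k-2$'' and ``creates a shorter negative cycle'' are the only two failure modes, and they coincide here); (3) the global argument: assume all $i$ bad, pick the interior paths $P_i$, use planarity to find two of them, say $P_i$ and $P_j$ on opposite sides, that together with a boundary arc bound a region and combine into a negative cycle of length $< 2k$, a contradiction. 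The main obstacle I anticipate is step (3): controlling how the interior shortcut paths for different $i$ interact — they may share vertices in complicated ways — and extracting from their union a single \emph{short enough negative cycle} rather than merely a short negative \emph{closed walk}. The standard trick is to take, among all bad triples, one minimizing $\ell(P_i)$, and then show $P_i$ together with the boundary path $v_{i-1}v_iv_{i+1}$ already violates minimality of $2k$ after checking its sign; I would push this minimal-counterexample-within-the-face approach to avoid the full summation. Bipartiteness is used crucially to ensure every cycle built has even length, so the only thing that can go wrong is the sign, which is exactly what the folding is designed to preserve.
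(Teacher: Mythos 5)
First, a point of reference: the paper does not prove this lemma at all --- it is quoted from \cite{NRS13} (``the following folding lemma of \cite{NRS13}''), so there is no in-paper proof to compare yours against. Judged on its own terms, your proposal correctly identifies the right template (the Klostermeyer--Zhang folding lemma, with ``shortest negative cycle of length $2k$'' replacing ``odd girth $2k+1$'') and correctly reduces a bad triple to an interior $v_{i-1}$--$v_{i+1}$ shortcut path. But it remains a plan rather than a proof: the global step (3), which you yourself flag as the main obstacle, is exactly where the entire content of the lemma lies, and you offer two candidate strategies for it (a weighted summation of defects around $F$, and a minimal-shortcut argument) without carrying out either.

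Moreover, the shortcut you propose in order to avoid the global argument fails for a parity reason specific to the bipartite setting. Since $v_{i-1}$ and $v_{i+1}$ lie in the same part of the bipartition, a bad triple yields an interior $v_{i-1}$--$v_{i+1}$ path $P_i$ of \emph{even} length at most $2k-2$ (not $2k-3$ as you write), and the closed walk $P_i \cup v_{i-1}v_iv_{i+1}$ then has length at most $2k$, not strictly less than $2k$; a negative closed walk of length $2k$ is perfectly compatible with the negative girth being $2k$, so even the minimal $P_i$ together with the boundary $2$-path does not ``already violate minimality of $2k$.'' There is also a second complication you do not address: ``after a possible switching at one of the two vertices'' means a triple is bad only when \emph{both} switching choices fail, so badness of $i$ in fact produces two interior paths of length at most $2k-2$, one of each sign. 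Assembling these data, over all $i$ and using planarity and the Jordan curve theorem, into either a genuine negative cycle of length less than $2k$ or the conclusion that $F$ is itself a negative $2k$-face is the heart of the proof, and it is missing. To complete the argument you would need to write out this global step in full, as is done in \cite{NRS13}.
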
 

We observe that by applying this lemma repeatedly we get an image of $\hat{G}$ which is also a signed bipartite plane graph in which every facial cycle is a negative cycle of length exactly $2k$.

\begin{theorem}\label{thm:bipartite}
For any signed bipartite planar simple graph $(G, \sigma)$, we have $\chi_c(G, \sigma)< 4$.
\end{theorem}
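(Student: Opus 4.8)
The plan is to prove Theorem~\ref{thm:bipartite} by a minimum counterexample argument, using the bipartite folding lemma (Lemma~\ref{lem:Folding Lemma}) to reduce to a highly structured graph and then exhibiting a circular $(4-\epsilon)$-coloring directly. Concretely, suppose $\hat{G}=(G,\sigma)$ is a signed bipartite planar simple graph with $\chi_c(\hat{G})=4$, chosen with as few vertices as possible (and, among those, as few edges as possible). First I would dispose of easy structural features: $G$ is connected, has minimum degree at least $2$ (a degree-$1$ vertex can be removed by Theorem~\ref{thm:2-vertex}, or more simply contributes nothing), and in fact I would argue there is no vertex of degree $2$ whose removal still leaves a signed graph — here again Theorem~\ref{thm:2-vertex} applies, since deleting a degree-$2$ vertex keeps the graph signed bipartite planar simple and smaller, hence of circular chromatic number $<4$, and then the theorem pushes this back up. So the minimum counterexample has minimum degree at least $3$.

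Next I would bring in planarity and the folding lemma. Let $2k$ be the length of a shortest negative cycle of $\hat{G}$ (if there is no negative cycle, the signature is switching-equivalent to all-positive, and a bipartite graph is $2$-colorable, hence circular $4$-colorable — in fact $\chi_c<4$ trivially, so we may assume negative cycles exist and $G$ is not a forest). By repeatedly applying Lemma~\ref{lem:Folding Lemma}, $\hat{G}$ admits a homomorphism to a signed bipartite plane graph $\hat{H}$ in which every facial cycle is a negative $2k$-cycle; since homomorphisms do not increase $\chi_c$, it suffices to bound $\chi_c(\hat{H})$, and moreover $\hat H$ has a very rigid face structure. The key point to exploit is an Euler-formula / discharging count: in a plane graph where every face has length exactly $2k\ge 4$ and (after the minimum-degree reduction) every vertex has degree at least $3$, counting $\sum_f |f| = 2|E|$ forces $|E| = k|F|$, and combined with Euler's formula $|V|-|E|+|F|=2$ this yields $|V| = |E|(1-\tfrac1k)+2 \ge$ something that contradicts $|E|\ge \tfrac32|V|$ unless $k$ is small — I would check that this pins down $k=2$, i.e. the shortest negative cycle has length $4$ and every face is a negative $4$-cycle. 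At that point $\hat H$ is precisely (the kind of graph) $S(G')$ for a planar graph $G'$: a signed bipartite planar graph all of whose faces are negative $4$-cycles is obtained from a plane graph $G'$ by the $S(\cdot)$ construction (the degree-$2$ vertices of $\hat H$ being the subdivision vertices). Then Proposition~\ref{prop:X(G)-->X(S)G))} gives $\chi_c(\hat H)=4-\tfrac{4}{\chi_c(G')+1}<4$, contradicting $\chi_c(\hat G)=4$.

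There is a gap in the above sketch that I expect to be the real obstacle: the reduction to minimum degree $3$ via Theorem~\ref{thm:2-vertex} interacts awkwardly with the folding lemma, because folding can create new vertices of degree $2$ (by identifications) or, conversely, one might want to fold first and then remove degree-$2$ vertices, but removing vertices destroys the "all faces are $4$-cycles" property. The clean way around this is to reverse the order and be careful: first fold $\hat G$ down to $\hat H$ with all faces negative $2k$-cycles, observe that $\hat H$ maps from $\hat G$ so $\chi_c(\hat H)\ge \chi_c(\hat G)=4$ would suffice for a contradiction only if $\hat H$ itself has $\chi_c \ge 4$ — but we only know $\chi_c(\hat H)\le \chi_c(\hat G)$ is false in general; homomorphism gives $\chi_c(\hat G)\le\chi_c(\hat H)$, which is the wrong direction. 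So instead I would apply folding inside the proof that $\chi_c(\hat G)<4$ by arguing that $\hat G$ itself, being a minimum counterexample, must already satisfy the folded conclusion: if some face of $\hat G$ is not a negative $2k$-cycle, folding produces a smaller (in vertices) signed bipartite planar simple graph $\hat G'$ with $\chi_c(\hat G')<4$ by minimality, and then — this is the crux — one lifts a circular $(4-\epsilon)$-coloring of $\hat G'$ back to $\hat G$. Lifting across a single fold (an identification of $v_{i-1},v_{i+1}$ with a possible switch) is where the work lies; one must show the coloring of $\hat G'$ gives a proper coloring of $\hat G$ possibly after a local modification of the colors of the affected vertices, analogous in spirit to the scaling-and-inserting trick of Theorem~\ref{thm:2-vertex}. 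Once $\hat G$ is known to have all faces negative $2k$-cycles, the Euler count forces $k=2$ (using $\delta(\hat G)\ge 3$ on the non-degree-$2$ side), and the $S(G')$ identification finishes via Proposition~\ref{prop:X(G)-->X(S)G))}. I would therefore structure the write-up as: (1) trivial reductions and the no-negative-cycle case; (2) minimum degree on the "big" side is $\ge 3$ via Theorem~\ref{thm:2-vertex}; (3) all faces are negative $2k$-cycles via folding + minimality + a lifting lemma; (4) Euler's formula forces $k=2$; (5) recognize $\hat G$ as (a subgraph giving) $S(G')$ and apply Proposition~\ref{prop:X(G)-->X(S)G))}.

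Regarding which step is hardest: step (3), the lifting of a circular coloring across a fold, is the main obstacle, since the folding lemma is purely topological and says nothing about colorings; making it $\mathcal C_{<4}$-compatible requires an argument in the style of Theorem~\ref{thm:F_u} and Theorem~\ref{thm:F_uv}, and indeed I suspect the intended proof invokes precisely the $\mathcal C_{<4}$-closed operations of Section~\ref{sec:C<4} — one should check whether identifying $v_{i-1}$ and $v_{i+1}$ (which have the common neighbor $v_i$) can be realized as, or bounded by, an application of $F_{u}$ or $F_{uv}$ to a related graph, which would let the coloring lift for free. If that identification is not literally one of those operations, a bespoke but routine modification-of-colors argument (scale the circle by a factor $1+\Theta(\epsilon)$, then insert a short interval to create slack near the identified color classes) should still work, exactly as in the proof of Theorem~\ref{thm:2-vertex}.
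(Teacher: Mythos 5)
There is a genuine gap, and it sits exactly where your sketch claims victory: step (5). After your reductions the minimum counterexample has minimum degree at least $3$ and all faces negative $4$-cycles, but such a graph is \emph{not} of the form $S(G')$ — by construction $S(G')$ always has an entire side of the bipartition consisting of degree-$2$ vertices, which your graph no longer possesses (think of the cube $Q_3$ with a signature making all six faces negative: every vertex has degree $3$). So Proposition~\ref{prop:X(G)-->X(S)G))} does not apply, and the claim that ``a signed bipartite planar graph all of whose faces are negative $4$-cycles is obtained from a plane graph by the $S(\cdot)$ construction'' is false. Reducing to the class $\mathcal{SPB}_2$ would amount to reducing the theorem to the Four-Color-Theorem restatement, and that reduction is precisely what does not exist; the whole content of the theorem is handling graphs outside $\mathcal{SPB}_2$. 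The paper's proof agrees with you up to this point (minimum counterexample, folding plus minimality to get all faces negative $4$-cycles, Theorem~\ref{thm:2-vertex} to get $\delta\geq 3$), but then the real work is three structural claims proved via the $\mathcal{C}_{<4}$-closed operations: every even-degree vertex lies in a separating positive $4$-cycle (via $F_u$ and Theorem~\ref{thm:F_u}), two adjacent odd-degree vertices cannot both avoid separating positive $4$-cycles (via $F_{uv}$ and Theorem~\ref{thm:F_uv}), and there are no positive $4$-cycles at all (via a $K_{3,3}$/planarity argument on an innermost separating positive $4$-cycle). Together these force every vertex to have odd degree with no two adjacent, hence no edges, a contradiction.

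A secondary remark: the ``lifting lemma'' you identify as the main obstacle in step (3) is a non-issue, and you have the inequality backwards in your own correction of yourself. Folding is an identification, i.e., a homomorphism $\hat G\to\hat G'$, and homomorphisms give $\chi_c(\hat G)\leq\chi_c(\hat G')$ — which is exactly the direction you need: $\hat G'$ has fewer vertices, so $\chi_c(\hat G')<4$ by minimality, and any circular $(4-\epsilon)$-coloring of $\hat G'$ composes with the folding map to color $\hat G$ with no modification whatsoever. So the effort you budget for step (3) should instead be spent on the post-folding structural analysis, where the operations $F_u$ and $F_{uv}$ are indeed used, but not to justify folding — rather to kill off even-degree vertices and adjacent odd-degree pairs.
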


\begin{proof}
Assume that $(G, \sigma)$ is a minimum counterexample, i.e.,  for no $\epsilon>0$, $(G, \sigma)$ admits a circular $(4-\epsilon)$-coloring, and $|V(G)|$ is minimized.

The minimality of $(G, \sigma)$, together with the bipartite folding lemma, implies that every facial cycle of $(G, \sigma)$, in any planar embedding of $G$, is a negative $4$-cycle. From here on, we will consider $(G, \sigma)$ together with a planar embedding. Moreover, since any subgraph of $(G, \sigma)$ is also a signed bipartite planar simple graph, it follows from Theorem~\ref{thm:2-vertex} that $\delta(G)\geq 3$.

We proceed by proving some structural properties of $(G, \sigma)$ in the form of claims.

\medskip
\noindent
{\bf Claim 1}
{\em Every vertex of even degree in $(G, \sigma)$ must be in a separating positive $4$-cycle.}

Assume to the contrary that a vertex $u$ is of even degree and it is in no separating positive $4$-cycle. Let $C$ be the boundary of the face in $(G-u, \sigma)$ which contains $u$. This cycle $C$ in the embedding of $(G, \sigma)$ bounds $d(u)$ faces, each of which is a negative 4-cycle. Thus $C$ is a positive cycle. Since switching does not affect the circular chromatic number, we may assume $\sigma$ is a signature in which all the edges of $C$ are positive. 

Let $(G', \sigma')$ be the signed graph obtained from $(G, \sigma)$ by contracting all edges incident with $u$ and by replacing each set of parallel edges of a same sign with a single edge of the same sign. Observe that, as $u$ is in no separating positive 4-cycle, $(G', \sigma')$ has no digon. Thus $(G',u')$ is a signed simple graph.  Furthermore, it is a signed bipartite planar simple graph which has less vertices than $(G, \sigma)$. Thus it admits a circular $(4-\epsilon)$-coloring for some positive $\epsilon$. But then Theorem~\ref{thm:F_u} implies that $\chi_c(G, \sigma)<4$.

\medskip
\noindent
{\bf Claim 2}
{\em For every pair of adjacent vertices each of an odd degree in $(G, \sigma)$, at least one is in a separating positive $4$-cycle.}

The proof of this claim is similar to the previous one. Towards the contradiction, let $x$ and $y$ be two adjacent vertices of odd degrees, neither of which is in a separating positive $4$-cycle. We consider the facial cycle $C$ which is obtained after deleting $x$ and $y$, and once again conclude that $C$ must be a positive cycle. Without loss of generality, we assume that $\sigma$ assigns positive signs to all edges of $C$. We may also assume that $xy$ is a negative edge.

We consider two signed graphs as follows. The first one is obtained from $(G-xy, \sigma)$ by contracting all the edges incident to $x$ where the new vertex is denoted $u$, and by contracting all the edges incident to $y$ where the new vertex is denoted $v$ and then adding a positive edge to connect $u$ and $v$. We denote the result by $\hat{G}'$ and note that it is a signed bipartite planar simple graph with no digon. By the minimality of $(G, \sigma)$, we conclude that $\chi_c(\hat{G}')<4$. The second signed graph we consider is obtained from $(G,\sigma)$ by identifying positive neighbors of $x$ into a new vertex $u$, the negative ones into a new vertex $u'$, and by identifying positive neighbors of $y$ into a new vertex $v$, the negative ones into a new vertex $v'$. Let $\hat{G}''$ be the resulting signed graph. We note that $\hat{G}''$ is not necessarily planar anymore. But regardless one can easily observe that $\hat{G}''$ is a subgraph of an $F_{uv} (\hat{G}')$. 

It follows from Theorem~\ref{thm:F_uv} that $\hat{G}''$ is in $\mathcal{C}_{<4}$, but $\hat{G}''$ is a homomorphic image of $(G, \sigma)$ and its circular chromatic number provides a bound on the circular chromatic number of $(G, \sigma)$.

\medskip
\noindent
{\bf Claim 3} 
{\em There is no positive $4$-cycle in $(G, \sigma)$.}

Since all faces of $(G, \sigma)$ are negative 4-cycles, any such a cycle must be a separating cycle. Towards a contradiction, among all separating positive 4-cycles, let $C$ be a separating positive 4-cycle with the minimum number of vertices inside. Let $v_1$, $v_2$, $v_3$, and $v_4$ be the four vertices of $C$ in this cyclic order. Let $u$ be a vertex inside $C$. As $(G,\sigma)$ is bipartite, $u$ can be adjacent to at most two vertices of $C$ and as $(G, \sigma)$ has minimum degree at least $3$, it must have a neighbor $v$ which is also inside $C$. By Claim~1 and Claim~2, at least one of $u$ or $v$, say $u$, is in a separating positive 4-cycle, denoted $C_u$. Since $C$ contains the minimum number of vertices inside, $C_u$ cannot be all inside of $C$, thus $u$ is adjacent to two vertices of $C$. Noting that $G$ is bipartite, and by symmetry, we may assume $v_1$ and $v_3$ are adjacent to $u$. 

We now claim that the vertex $v$ is in no separating positive 4-cycle. That is because, if so, then using the same argument, and noting the bipartiteness of $G$, it must be adjacent to $v_2$ and $v_4$. However, $v$ is separated from at least one of them by the path $v_1uv_3$ and $C$. Therefore, by Claim~1, $v$ is of odd degree and, by Claim~2, each neighbor of $v$ must be in a separating positive 4-cycle. As $v$ is of degree at least $3$ and can only be adjacent to at most one of $v_2$ and $v_4$, it has a neighbor $u_1$ which is in a separating 4-cycle. As $u_1$ is in the same part (of the bipartition of $G$) as $u$, and again by the minimality assumption on $C$, the vertex $u_1$ must be connected with both of $v_1$ and $v_3$. We note that this would separate $v$ from both $v_2$ and $v_4$. Thus there must be a third neighbor $u_2$ (of $v$), which is also in a separating 4-cycle and for the same reason then must be adjacent to both $v_1$ and $v_3$. The subgraph induced by $v$, $v_1$, $v_3$, $u$, $u_1$, and $u_2$ is then the complete bipartite graph $K_{3,3}$, contradicting that $G$ is a planar graph.
\medskip

To complete the proof of the theorem, we observe that, by Claims 1 and 3, all vertices must be of odd degree, and, by Claim 2, no two of them can be adjacent, but then any mapping to the points of any circle is circular coloring, a contradiction with our choice of $(G, \sigma)$.
\end{proof}

Next, using the notion of tight cycle, we improve the bound of Theorem~\ref{thm:bipartite}. We provide a concrete bound in terms of the number of vertices and then show that this improved bound is tight.

\begin{theorem}\label{thm:planarbipartite}
	For any signed bipartite planar simple graph $(G, \sigma)$ on $n$ vertices, we have:
	\begin{itemize}
		\item For each odd value of $n$, $\chi_c(G, \sigma)\leq  4-\dfrac{8}{n+1}$.
		\item For each even value of $n$, $\chi_c(G, \sigma)\leq  4-\dfrac{8}{n+2}$.
	\end{itemize}
Moreover, these bounds are tight for each value of $n\geq 2$.
\end{theorem}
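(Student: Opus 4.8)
\emph{Overview and the upper bound.} The proof has two halves. The upper bound follows the template of Theorem~\ref{thm:2-degenerate}, and the tightness follows the template of the family $\Omega_i$. For the upper bound: if $(G,\sigma)$ is a forest then $\chi_c(G,\sigma)\le 2$, which is below both claimed values, so assume $(G,\sigma)$ has a cycle. By Theorem~\ref{thm:bipartite}, $\chi_c(G,\sigma)<4$, and by Proposition~\ref{prop:st} we may write $\chi_c(G,\sigma)=\frac{2(s+t)}{2a+t}=\frac pq$, where $s+t$ is the length of a cycle of $G$. Since $G$ is bipartite this length is even, so $4\mid p$; and since a cycle of $G$ spans at most $n$ vertices --- at most $n-1$ when $n$ is odd, as a bipartite graph has no odd cycle --- we get $p\le 2n$, resp.\ $p\le 2n-2$. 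From $\frac pq<4$ and $4\mid p$ we obtain $p\le 4q-4$, hence $\chi_c(G,\sigma)\le 4-\frac 4q$; trivially also $\chi_c(G,\sigma)=\frac pq\le\frac{2n}{q}$, resp.\ $\frac{2n-2}{q}$. Thus $\chi_c(G,\sigma)\le\max_{q\in\mathbb N}\min\bigl\{4-\tfrac4q,\ \tfrac{2n}{q}\bigr\}$, resp.\ with $2n-2$ in place of $2n$, and the same elementary computation as in Theorem~\ref{thm:2-degenerate} (the sequence $\frac{2n}{q}$ decreasing, $4-\frac4q$ increasing) evaluates this maximum to $4-\frac{8}{n+2}$ at $q=\frac{n+2}{2}$ when $n$ is even and to $4-\frac{8}{n+1}$ at $q=\frac{n+1}{2}$ when $n$ is odd.

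\emph{Tightness: the construction.} For $n=2$ take $K_2^{-}$ and for $n=3$ take a $3$-vertex path containing a negative edge; each has $\chi_c=2$, which is exactly the bound in these cases. For the remaining cases I would build a family $\mathcal B_k$ on $2k$ vertices ($k\ge2$) with $\mathcal B_2=C_4^{-}$, passing from $\mathcal B_{k-1}$ to $\mathcal B_k$ as follows: pick a vertex $v$ of degree $2$ (e.g.\ the most recently added one), add a twin $v'$ of $v$ (a new vertex joined to the two neighbours of $v$ by edges of the same respective signs), and add a further vertex $w$ joined to $v$ by a positive edge and to $v'$ by a negative edge. Because $v$ has degree $2$, its two neighbours lie in the same part and on a common face, so each of the two additions is an instance of the $\mathcal C_{<4}$-closed operation of Theorem~\ref{thm:2-vertex} carried out so that the result remains a signed bipartite planar simple graph (in particular $2$-connected, so faces stay disks); hence $\mathcal B_k\in\mathcal C_{<4}$. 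For even $n=2k$ use $\mathcal B_k$, and for odd $n=2k-1$ use $\mathcal B_{k-1}$ together with an isolated vertex (which does not change $\chi_c$); one checks that the target value $4-\frac{4}{k+1}$ for $\mathcal B_k$ equals the bound at $n=2k$ and that $4-\frac{4}{k}$ for $\mathcal B_{k-1}$ equals the bound at $n=2k-1$.

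\emph{Tightness: the lower bound for $\mathcal B_k$.} This is the heart, proved by induction with the strengthened statement that $\chi_c(\mathcal B_k)=4-\frac{4}{k+1}$ and that every tight cycle of $\mathcal B_k$ in a circular $(4-\frac{4}{k+1})$-coloring is Hamiltonian. By Proposition~\ref{prop:st} and bipartiteness, $\chi_c(\mathcal B_k)$ is a fraction $\frac pq$ with $4\mid p\le 4k$, and no such fraction lies strictly between $4-\frac4k$ and $4-\frac{4}{k+1}$, so it suffices to show $\mathcal B_k$ has no circular $(4-\frac4k)$-coloring. Restricting such a coloring $\psi$ to $\mathcal B_{k-1}$ yields a circular $(4-\frac4k)$-coloring of $\mathcal B_{k-1}$, which attains $\chi_c(\mathcal B_{k-1})$; by the inductive hypothesis its tight cycle is Hamiltonian, so the degree-$2$ vertex $v$ lies on it and its colour is rigidly forced by those of its two neighbours. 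A twin $v'$ of $v$, being subject to exactly these (now tight) constraints, must then satisfy $\psi(v')=\psi(v)$; but $w$ must be at distance at least $1$ from $\psi(v)$ and at distance at least $1$ from the antipodal of $\psi(v')=\psi(v)$, which is impossible on a circle of circumference $<4$. Finally, writing $\chi_c(\mathcal B_k)=\frac{4k}{k+1}$ and using the constraint $4\mid 2(s+t)$ from bipartiteness (the analogue of the $\gcd$ bookkeeping in the $\Omega_i$ proof) forces any tight cycle of $\mathcal B_k$ at its extremal value to span all $2k$ vertices, closing the induction.

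\emph{Main obstacle.} The upper bound is routine once Theorem~\ref{thm:bipartite} is in hand; the work is in the tightness half, and there are two delicate points. The first is purely structural: making sure that every step of the construction stays inside the class of signed bipartite planar simple graphs --- the paper repeatedly stresses that copies and contractions easily introduce digons, positive loops, or a forbidden $K_{3,3}$, so the twin $v'$ and vertex $w$ must be attached at the right place and with a chosen switching. The second, and the real crux, is the rigidity claim ``$\psi(v')=\psi(v)$''; I would prove it by a direct local analysis around $v$ (as in the $C_4^{-}$ computation, where the positive-edge constraint on $v'$ gives an interval and the negative-edge constraint gives a union of two intervals meeting it in the single point $\psi(v)$), the key being that the surrounding colouring of $\mathcal B_{k-1}$ is tight and hence admits no slack at a degree-$2$ vertex on its tight cycle.
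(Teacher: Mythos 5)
Your proof is correct, and both halves land where the paper does, but the tightness half is organized differently enough to be worth comparing. The upper bound is the same argument as the paper's (cycle length even, so the numerator $p$ in Proposition~\ref{prop:st} is divisible by $4$ and at most $2n$, resp.\ $2n-2$ for odd $n$; combine with $p<4q$). For tightness, the paper builds $\Gamma^*_i$ by subdividing the edge $v_1v_2$ of $\Omega_{i-1}$ and then \emph{cites} Corollary~46 and Lemma~41 of \cite{NWZ21} to evaluate $\chi_c(\Gamma^*_i)$, only remarking that an independent proof ``quite similar to the proof of the circular chromatic number of $\Omega_i$'' exists. Your family $\mathcal B_k$ (start from $C_4^-$, repeatedly twin the most recent degree-$2$ vertex and attach an apex with one positive and one negative edge) is, up to relabelling and switching, the same graph as $\Gamma^*_k$ --- subdividing $v_1v_2$ of the base triangle is exactly what turns $K_3$ into $C_4^-$ --- so the construction is not genuinely new; what you add is the self-contained induction the paper omits: the Farey-neighbour argument that no admissible fraction lies strictly between $4-\tfrac4k$ and $4-\tfrac4{k+1}$, the rigidity $\psi(v')=\psi(v)$ at a degree-$2$ vertex of a Hamiltonian tight cycle (your identification of this as the crux is right, and the consistent orientation of the tight cycle is what makes the two forbidden arcs cover the whole circle except the single point $\psi(v)$), and the divisibility computation $\gcd(k,k+1)=1$ forcing the tight cycle of $\mathcal B_k$ to be Hamiltonian. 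This buys a proof that does not lean on the indicator machinery of \cite{NWZ21}, at the cost of having to verify by hand that each step of the recursion stays bipartite, planar and simple --- which your degree-$2$ placement argument does handle.
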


\begin{proof}
As stated in Proposition~\ref{prop:st}, we know that $\chi_c(G, \sigma)=\frac{p}{q}$ where $p$ is twice the length of a cycle in $G$. As $G$ is a bipartite graph, the length of each cycle is even. Thus $p=4k$ for some positive integer $k$ such that $2k\leq n$.

 Since $\chi_c(G, \sigma)<4$ we have $\frac{p}{q}<4$, in other words, $4k<4q$. As $k$ and $q$ are integers we have $k+1\leq q$. Thus $\chi_c(G, \sigma)\leq \frac{4k}{k+1}=4-\frac{4}{k+1}$. The upper bounds claimed in the theorem then follows by noting that $n\geq 2k$ and that $n\geq 2k+1$ when $n$ is odd.

To prove that the bounds are tight we need to build an example $\Gamma^*_i$ for when $n=2i$ is even. Then by adding an isolated vertex to $\Gamma_{i}$ we get an example that works for $n=2i+1$.  

For $i\geq 2$, the signed graph $\Gamma^*_i$ is built from the signed graph  $\Omega_{i-1}$ by subdividing the edge $v_1v_2$ once and assigning a positive sign to one of the resulting edges and negative sign to the other. 

We should note that $\Gamma^*_i$ is a homomorphic image of the signed graph $\Gamma_i$ from Definition~43 of \cite{NWZ21}. To build $\Gamma^*_i$ from $\Gamma_i$ (of \cite{NWZ21}), one must identify the last two vertices (of $\Gamma_i$) after a suitable switching. In fact it follows that  $\Gamma^*_i$ is the core of $\Gamma_i$. 
	
Then for even values of $i$ the formula for the circular chromatic number of $\Gamma^*_i$ follows from Corollary~46 of \cite{NWZ21} by taking $G$ to be $K_2$. A similar computation can be done by taking $\Gamma_{2k+1}$ as an indicator $\mathcal{I}_{-}$ in Lemma~41 of \cite{NWZ21} with $I_{+}$ being free to choose. Applying this to $(K_2, -)$ then we get the formula for the circular chromatic number of $\Gamma_{2k+1}$.

An independent proof, quite similar to the proof of the circular chromatic number of $\Omega_i$, can be done by an added fact that in any circular $(4-\frac{8}{2i+2})$-coloring of $\Gamma^*_i$ the tight cycle is a Hamilton cycle.
\end{proof}

\section{Discussion and Questions}

In this work we have observed that bounding the circular chromatic number of a very restricted families of signed graphs can capture some of the most motivating problems in graph theory such as the 4-color theorem. 

Then by strengthening some results from \cite{NWZ21} we provided  improved bounds for two families of signed graphs: signed 2-degenerate simple graphs and signed bipartite planar simple graphs. 

We note that some of the well-known problems in circular coloring of graphs fit into this study by viewing a graph $G$ as a signed graph $(G,+)$ where all edges are positive. In particular, providing the best possible bound for the circular chromatic number of planar graphs of a given odd girth is one of main questions in graph theory which captures the 4CT, the Gr\"otzsch theorem, and the Jaeger-Zhang conjecture. 

Here we mention a few new questions that are based on the notion of the circular coloring of signed graphs.

\begin{Question}\label{que:PlanarSimple}
Given a signed planar simple graph $\hat{G}$, does there exist an $\epsilon=\epsilon(\hat{G})$ such that $\hat{G}$ admits a circular $(6-\epsilon)$-coloring?
\end{Question}

First example of a signed planar simple graph whose circular chromatic number is larger than 4 is given in \cite{KN21}. An example of signed planar simple graph whose circular chromatic number is $\frac{14}{3}$ is given in \cite{NWZ21}. The upper bound of $6$ follows from the fact that planar simple graphs are $5$-degenerate. The existence of any signed planar simple graph with circular chromatic number larger than $\frac{14}{3}$ is an open problem.

Restricted on the class of signed bipartite planar graphs and with an added negative girth condition (that is the length of a shortest negative cycle), we have the following question.

\begin{Question}\label{que:PlanrBipartiteGirht6}
Given a signed bipartite planar graph $\hat{G}$ of negative girth $6$, does there exist an $\epsilon=\epsilon(\hat{G})$ such that $\hat{G}$ admits a circular $(3-\epsilon)$-coloring?
\end{Question}

That every signed bipartite planar graph of negative girth at least $6$ admits a circular 3-coloring is recently proved in \cite{NW21}, noting that this proof uses the 4CT and some extensions of it. On the other hand, the best example of signed bipartite planar graph of negative girth $6$ we know has circular chromatic number $\frac{14}{5}$. It remains an open problem to build such signed graphs of circular chromatic number between $\frac{14}{5}$ and $3$.
 
We should mention that a negative answer to Question~\ref{que:PlanrBipartiteGirht6} would imply a negative answer to Question~\ref{que:PlanarSimple}. Let $T_2(G,\sigma)$ be a signed graph obtained from $(G, \sigma)$ by subdividing each edge $uv$ once and then assign a signature in such a way that the sign of the corresponding $uv$-path is $-\sigma(uv)$.
Viewing positive and negative paths of length 2 as $\mathcal{I}_{-}$ and $\mathcal{I}_{+}$ (respectively), and applying Lemma~41 of \cite{NWZ21} we have $$\chi_c(T_2(G,\sigma))=\frac{4\chi_c(G, \sigma)}{2+\chi_c(G, \sigma)}.$$

For signed bipartite planar graphs of negative girth at least $8$, the upper bound of $\frac{8}{3}$ for their circular chromatic numbers is proved in \cite{NPW21}. For signed bipartite planar graphs of negative girth $2k$, $k\geq 5$, the best current bound follows from recent results of \cite{LNWZ21}.

\medskip
{\bf Acknowledgement.} 
This work is supported by the French ANR project HOSIGRA (ANR-17-CE40-0022) and by the Slovak Science and Technology Assistance Agency under the contract No. APVV-19-0308. It has also received funding from the European Union's Horizon 2020 research and innovation program under the Marie Sklodowska-Curie grant agreement No 754362.

\bibliographystyle{plain}

\begin{thebibliography}{99}
	
	\bibitem{KS18}
	{\sc Kang Y. and Steffen. E.}
	\newblock Circular coloring of signed graphs.
	\newblock {\em J. Graph Theory}, 87(2) (2018), 135--148.
	
	\bibitem{KN21}
	{\sc Kardo\v{s}, F., and Narboni, J.}
	\newblock On the 4-color theorem for signed graphs.
	\newblock {\em European J. Combin.}, 91 (2021), 103215.
	

	\bibitem{LNWZ21}
	{\sc Li, J., Naserasr, R., Wang, Z. and Zhu, X.}
	\newblock Circular flow of cycle-switching signed graphs.
	\newblock {\em Manuscript}.

	
	\bibitem{MRS16}
	{\sc M\'{a}\v{c}ajov\'{a}, E., Raspaud, A., and \v{S}koviera, M.}
	\newblock The chromatic number of a signed graph.
	\newblock {\em Electron. J. Combin. 23}, 1 (2016), P1.14, 10 pp.
	
    \bibitem{NPW21}
	{\sc Naserasr, R., Pham, L.A., and Wang, Z.}
	\newblock Density of $C_{-4}$-critical signed graphs.
	\newblock {\em arXiv 2101.08612}.
	
	\bibitem{NRS13}
	{\sc Naserasr, R., Rollov\'{a}, E., and Sopena, E.}
	\newblock On homomorphisms of planar signed graphs to signed projective cubes.
	\newblock {\em In The Seventh European Conference on Combinatorics, Graph Theory and Applications}, (pp. 271-276). Edizioni della Normale, Pisa.
	
	\bibitem{NRS15}
	{\sc Naserasr, R., Rollov\'{a}, E., and Sopena, E.}
	\newblock Homomorphisms of signed graphs.
	\newblock {\em J. Graph Theory 79}, 3 (2015), 178--212.

	
	\bibitem{NSZ21}
	{\sc Naserasr, R., Sopena, E., and Zaslavsky, T.} 
	\newblock Homomorphisms of signed graphs: an update. 
	\newblock {\em European J. Combin.} 91 (2021), 103222.

	\bibitem{NW21}
	{\sc Naserasr, R. and Wang, Z.}
	\newblock Circular coloring of signed bipartite planar graphs.
	\newblock {\em EuroComb 2021}.
	
	\bibitem{NWZ21}
	{\sc Naserasr, R., Wang, Z., and Zhu, X.}
	\newblock Circular chromatic number of signed graphs.
	\newblock {\em Electron. J. Combin. 28} 2 (2021), P2.44, 40 pp.
	

	
	
	\bibitem{Z82a}
	{\sc Zaslavsky, T.}
	\newblock Signed graph coloring.
	\newblock {\em Discrete Math. 39}, 2 (1982), 215--228.
	
	
\end{thebibliography}

\end{document}